\newtheorem{prop}{Proposition}
\newtheorem{theorem}{Theorem}
\newtheorem{lemma}{Lemma}
\newtheorem{corollary}{Corollary}
\newcommand{\Prob}[1]{\mathbb{P}\left({#1}\right)}
\newcommand{\Expect}[1]{\mathbb{E}\left[{#1}\right]}
\newcommand{\Expects}[2]{\mathbb{E}_{{#1}}\left[{#2}\right]}
\newcommand{\Var}[1]{\mathsf{Var}\left[{#1}\right]}
\newcommand{\Vars}[2]{\mathsf{Var}_{#1}\left[{#2}\right]}
\newcommand{\Cor}[1]{\mathsf{Corr}\left[{#1}\right]}
\newcommand{\asVar}{\mathsf{Var}}
\newcommand{\PMH}{\mathsf{P}_{{\mathrm{MH}}}}
\newcommand{\Ppm}{\mathsf{P}_{{\mathrm{pm}}}}
\newcommand{\Pcpm}{\mathsf{P}_{{\mathrm{cpm}}}}
\newcommand{\Ptilpm}{\widetilde{\mathsf{P}}_{{\mathrm{pm}}}}
\newcommand{\epsMH}{\epsilon_{{\mathrm{MH}}}}
\newcommand{\leftGapPM}{\epsilon^{\mathrm{L}}_{\mathrm{pm}}}
\newcommand{\pihat}{\widehat{\pi}}
\newcommand{\hhat}{\widehat{h}}
\newcommand{\Phat}{\widehat{P}}
\newcommand{\thetahat}{\widehat{\theta}}
\newcommand{\qtil}{\widetilde{q}}
\newcommand{\alphabar}{\overline{\alpha}}
\newcommand{\alphatil}{\widetilde{\alpha}}
\newcommand{\Ptil}{\widetilde{P}}
\newcommand{\Qtil}{\widetilde{Q}}
\newcommand{\md}{\mathsf{d}}
\newcommand{\cE}{\mathcal{E}}
\newcommand{\cX}{\mathcal{X}}
\newcommand{\cW}{\mathcal{W}}
\newcommand{\ESS}{\mathsf{ESS}}
\theoremstyle{plain}
\newtheorem{assumption}{Assumption}
\title{Variance bounds and robust tuning for pseudo-marginal Metropolis--Hastings algorithms}
\author{Chris Sherlock}
\date{}
\begin{document}

\maketitle
\vspace{-1.5cm}

\begin{abstract}
  The general applicability and ease of use of the pseudo-marginal Metropolis--Hastings (PMMH) algorithm, and particle Metropolis--Hastings in particular, makes it a popular method for inference on discretely observed Markovian stochastic processes. The performance of these algorithms and, in the case of particle Metropolis--Hastings, the trade off between improved mixing through increased accuracy of the estimator and the computational cost were investigated independently in two papers, both published in 2015. Each suggested choosing the number of particles so that the variance of the logarithm of the estimator of the posterior at a fixed sensible parameter value is approximately 1. This advice
  has been widely and successfully adopted.
  We provide new, remarkably simple upper and lower bounds on the asymptotic variance of PMMH algorithms. The bounds explain how blindly following the 2015 advice can hide serious issues with the algorithm and they strongly suggest an alternative criterion. In most situations our guidelines and those from 2015 closely coincide; however, when the two differ it is safer to follow the new guidance. An extension of one of our bounds shows how the use of correlated proposals can fundamentally shift the properties of pseudo-marginal algorithms, so that asymptotic variances that were infinite under the PMMH kernel become finite.
  \end{abstract}

\section{Introduction}
\label{sec.intro}
The Metropolis-Hastings Markov chain Monte Carlo algorithm \cite[e.g.,][]{MCMChandbook} is a frequent method of choice for Bayesian inference on the parameters of a statistical model when it is straightforward to evaluate the posterior pointwise up to a normalisation constant. In many common scenarios, however, pointwise evaluation of the likelihood is not feasible; for example, because it involves a high-dimensional integral over latent variables. In such cases, bespoke solutions were traditionally obtained by extending the statespace of the Metropolis--Hastings chain to include the latent variables. More recently, however, the pseudo-marginal Metropolis--Hastings algorithm \cite[PMMH,][]{AndRob2009} has offered a convenient, off-the-shelf alternative: the unobtainable likelihood is replaced with a realisation of an unbiased estimator of it, and the statespace of the Markov chain is extended by a single scalar quantity: that estimator. 

The simplicity and power of PMMH and, in particular, of a version that we will call particle MH, which is suitable for hidden Markov models and obtains the unbiased estimate through a particle filter \cite[]{ADH2010}, have ensured its popularity in practice. At the time of writing, Scopus reports over $500$ citations for \cite{AndRob2009} and over $1300$ for \cite{ADH2010}.

Alongside the extensive usage of these algorithms comes a pressing need to understand their properties: when they can be trusted to behave well and how to tune them to balance the trade-off between computational cost per iteration, often specified through a number of samples or particles, and the asymptotic variance of the estimators of posterior expectations. This trade off was investigated in \cite{PSGK2012}, then \cite{DPDK2015} and \cite{STRR2015}. These papers use different methods, with the first two bounding the asymptotic variance and the last exploring the behaviour of a diffusion limit, but the tuning advice is very similar: choose a number of particles such that (at a sensible choice of the parameter) the variance of the logarithm of the estimator of the posterior is approximately $1$; \cite{She2016} demonstrates the robustness of this tuning advice to the choice of another tuning parameter. Between them, these papers have over $400$ citations on Scopus, demonstrating the uptake of this tuning advice.

The (non-geometric) convergence properties were first investigated in \cite{AndVih2015}, with more recent work in \cite{ALPW2022}. Finally, \cite{DelLee2018} examines necessary and sufficient conditions for the asymptotic variance of an estimator of a particular expectation to be finite. Of particular relevance here is that in all three of these articles, sufficient conditions for good behaviour (whether relatively fast convergence or finite asymptotic variance) were tied to the finiteness of polynomial moments of the estimator of the likelihood.

We provide three main contributions:
\begin{enumerate}
\item We derive simple, explicit upper and lower bounds on the asymptotic variance of estimators of posterior expectations that are obtained via PMMH. The bounds make it crystal clear that it is the second moment of the estimator of the likelihood that is key to good behaviour, not the second moment of its logarithm. Not only are our bounds simpler to state than those in \cite{DPDK2015} but they are also more straightforward to derive. We demonstrate that, despite this, our first bound is almost indistinguishable from the corresponding bound in \cite{DPDK2015}.
\item We correct the tuning advice from \cite{PSGK2012}, \cite{DPDK2015} and \cite{STRR2015}. The number of particles should not be chosen according to the variance of the logarithm of the likelihood estimator but according to the relative variance of the estimator. By the delta method, when this variance is relatively small, these criteria are almost equivalent; however, when the variance is large our advice will protect against poor performance when it is possible to do so, and it is likely to alert the user to more fundamental issues, such as infinite asymptotic variance of resulting estimators.
\item We both prove and demonstrate through simulation that a PMMH algorithm with infinite asymptotic variances may be rescued, so that it has finite asymptotic variances, through the use of the correlated pseudo-marginal Metropolis--Hastings algorithm \cite[]{DLKS2015,DDP2018}. \cite{DDP2018} establishes that correlated PMMH can achieve the same performance as PMMH using a reduced number of particles compared with PMMH; however, our result shows that correlated PMMH can fundamentally shift the properties of the algorthm.
\end{enumerate}

\section{Background}

\subsection{Kernels and acceptance probabilities}
Consider a target posterior distribution $\pi$ on $\cX\subseteq \mathbb{R}^d$ with a density of $\pi(\theta)$ with respect to Lebesgue measure. Let the estimators of the posterior (up to a normalisation constant) at $\theta$ and $\theta'$ be $\pihat(\theta;U)$ and $\pihat(\theta';U')$, where $U$ and $U'$ are auxiliary variables
sampled from densities of $q_*(u|\theta)$ and $q_*(u'|\theta')$ respectively.
We define the multiplicative noises as
\begin{equation}
  \label{eqn.multnoise}
  W:=\frac{\pihat(\theta;U)}{\pi(\theta)}
  ~~~\mbox{and}~~~
    W':=\frac{\pihat(\theta';U')}{\pi(\theta')}.
\end{equation}
The auxiliary sampling density,  $q_*(u|\theta)$, implies a proposal density for $w$, $\qtil_\theta(w)$; we also define $\nu_\theta(w):=w\qtil_{\theta}(w)$. The extended statespace of the pseudo-marginal chain is $\cX\times \cW$, where $\cW\subseteq [0,\infty)$.

Our interest lies in the following two kernels:
\begin{itemize}
\item $\PMH$ is a Metropolis-Hastings kernel on $\cX$ with a stationary density of $\pi(\theta)$. It proposes from $q(\theta|\theta')$ and accepts with a probability of
  \begin{equation}
    \label{eqn.alphaMH}
    \alpha_{MH}(\theta,\theta'):=1\wedge r(\theta,\theta'),~~~\mbox{where}~r(\theta,\theta'):=\frac{\pi(\theta')q(\theta|\theta')}{\pi(\theta)q(\theta'|\theta)}.
  \end{equation}
\item $\Ppm$ is the corresponding pseudo-marginal MH kernel on $\cX\times \cW$  with a stationary density of $\pi(\theta)\nu_{\theta}(w)$. This is a density since the noise being unbiased implies $\int_{\cW}w\qtil_\theta(w)\md w = \Expects{\qtil_\theta}{W}=1$. It proposes from $q(\theta'|\theta)\qtil_{\theta}(w')$ and accepts with a probability of
  \begin{equation}
    \label{eqn.alphaPM}
  \alpha_{pm}(\theta,w'\theta',w'):=1\wedge \frac{\pihat(\theta';u')q(\theta|\theta')}{\pihat(\theta;u)q(\theta'|\theta)}\equiv 1\wedge \left\{r(\theta,\theta')\frac{w'}{w}\right\}.
\end{equation}
\end{itemize}
The stationary density of $\Ppm$ can be seen to be $\pi(\theta)\nu_{\theta}(w)$ since
\begin{equation}
  \label{eqn.PMDB}
\pi(\theta)\nu_\theta(w)q(\theta'|\theta)\qtil_{\theta'}(w')\alpha_{pm}(\theta,w;\theta',w')=\pi(\theta')\nu_{\theta'}(w')q(\theta|\theta')\qtil_{\theta}(w)\alpha_{pm}(\theta',w';\theta,w).
\end{equation}

In proving some of our results, we will makes use of a third kernel:
\begin{itemize}
\item $\Ptilpm$ is a handicapped pseudo-marginal kernel on $\cX\times \cW$ that also has a stationary density of $\pi(\theta)\nu_\theta(w)$. It proposes from $q(\theta'|\theta)\qtil_{\theta'}(w')$ and accepts with a probability of
  \begin{equation}
    \label{eqn.alphaPMhandi}
  \alphatil_{pm}(\theta,w;\theta',w'):=\left\{1\wedge \frac{w'}{w}\right\}\times \{1\wedge r(\theta,\theta')\}.
  \end{equation}
  $\Ptilpm$ has the same stationary distribution as $\Ppm$ since \eqref{eqn.PMDB} continues to hold with $\alpha_{pm}$ replaced by $\alphatil_{pm}$. 
\end{itemize}

\subsection{Asymptotic variance}

Consider a Markov transition kernel, $Q$, on a statespace $\cX$ and with a stationary distribution of $\mu$. For a function of interest $h(\cdot)$, after $n$ iterations, the ergodic average used to estimate $\Expects{\mu}{h}$ is $\hhat_n:=\frac{1}{n}\sum_{i=1}^n h(x_i)$, where $x_i$ is the value of the chain after $i$ iterations. One of the most natural measures of the inefficiency of the chain is the asymptotic variance of this average:
\begin{equation}
  \label{eqn.defineAsympVar}
\asVar_Q(h):=\lim_{n\to \infty} n \Var{\frac{1}{n}\sum_{i=1}^n h(X_i)},~~~X_1\sim \mu.
\end{equation}
If $\asVar_Q(h)<\infty$ then for large $n$, $\Var{\hhat_n}\approx \asVar_Q(h)/n$. Since the bias decreases in proportion to $1/n$ the typical error in $\hhat_n$ decreases as $1/\sqrt{n}$; furthermore \cite[e.g.,][]{Gey1992}, assuming the chain is ireducible and aperiodic, $\hhat_n$ satisfies a central limit theorem: $\sqrt{n}(\hhat_n-\Expects{\mu}{h})\Rightarrow \mathsf{N}\left(0,\asVar_Q(h)\right)$.

When $\asVar_Q(h)<\infty$, since doubling the length of the chain halves the variance but has twice the computational cost, a natural measure of the computational inefficiency of $Q$ for estimating $\Expects{\mu}{h}$ is $C_Q\asVar_Q(h)$, where $C_Q$ is the computational cost per iteration of $Q$. In terms of optimising efficiency, $C_Q \asVar_Q(h)$ is the quantity of prime interest. However, this measure is meaningless when $\asVar_Q(h)=\infty$. 

Since $\alphatil_{pm}(\theta,w;\theta',w')\le \alpha_{pm}(\theta,w;\theta',w')$ it follows \cite[]{Tie1998} that  $\asVar_f(\Ptilpm)\ge \asVar_f(\Ppm)$ for all $f\in L^2(\pi\times \nu)$. Hence an upper bound on the asymptotic variance of the handicapped kernel is also an upper bound on the asymptotic variance of the kernel of interest.

The Dirichlet form of a function $f\in L^2(\mu)$ is
\[
\cE_Q(f)=\frac{1}{2}\iint_{\cX\times \cX} \mu(\md x) Q(x,\md y)\left\{f(y)-f(x)\right\}^2.
\]
The (right) spectral gap of $Q$ is then 
\[
\epsilon_{Q}=\inf_{f\in L^2_0(\mu): \|f\|_{L^2(\mu)}= 1} \cE_Q(f).
\]
Considering $\|f\|_{L^2(\mu)}=1$ with $f\in L_0^2(\mu)$, the above two equations give
\[
1-\iint_{\cX\times \cX}\mu(\md x) Q(x, \md y)f(x)f(y)\ge \epsilon_Q.
\]
That is, the largest possible lag-$1$ auto-correlation for any $f\in L^2(\mu)$ is $\rho=1-\epsilon_Q$. For any $f\in L^2(\mu)$ with $\Vars{\mu}{f}=1$, \cite[e.g.,][]{Gey1992},
\begin{equation}
  \label{eqn.stdSpecGapBd}
\asVar_Q(f)\le \frac{1+\rho}{1-\rho}=\frac{2-\epsilon_Q}{\epsilon_Q}.
\end{equation}
Because of this, any kernel $Q$ with a non-zero right spectral gap is termed \emph{variance bounding} \cite[]{RobRos2008}. Our upper bounds will arise from the following variational representation of the asymptotic variance \cite[e.g.,][proof of Lemma 33]{ALV2018}:
\begin{equation}
  \label{eqn.varRepvar}
  \asVar_Q(f)
  =
  \sup_{g\in L_0^2(\mu)} 4 \langle f,g\rangle - 2 \cE_Q(g)-\langle f,f \rangle.
\end{equation}
As well as the right spectral gap, we will also require the left spectral gap. For our kernel $Q$, this is:
\[
\epsilon_Q^L:=1+\inf_{f\in L_0^2(\mu), \|f\|=1}\iint_{\cX\times \cX} \mu(\md x)Q(x,\md y) f(x)f(y).
\]

\subsection{Literature and common assumptions}
The following assumption is common in earlier analyses and we, too, will be making it for some of our work:
\begin{assumption}
  \label{ass.indepNoise}
The proposed value of the multiplicative noise, $W'$, has a density or mass function of $\qtil(w')$ that is independent of $\theta'$.
\end{assumption}
This is justified in the large-data regime by the posterior mass concentrating around the true parameter value according to the Bernstein-von Mises Theorem and by the intuition that small changes in the parameter value will lead to small changes in the properties of the log-likelihood estimator from the particle filter; see \cite{SDDP2021}. Under Assumption \ref{ass.indepNoise}, the stationary density/mass function of $W$ simplifies to $\nu(w)=w\qtil(w)$.

All three of \cite{PSGK2012}, \cite{DPDK2015} and \cite{STRR2015} make the following assumption when moving from general bounds or limits to specific tuning guidance:
\begin{assumption}
  \label{ass.lognormalCLT}
The proposed value of the multiplicative noise, $W'$, satisfies $\log W'\sim \mathsf{N}(-\frac{1}{2}\sigma^2,\sigma^2)$ with the computational cost $\propto 1/\sigma^2$.
\end{assumption}
For a particle filter, as the number of observations $T$ increases to $\infty$ with the number of particles $N\propto T$,  \cite{BDD2014} shows that, subject to conditions, such a log-normal central limit theorem holds. Furthermore, the variance, $\sigma^2\propto 1/N$, that is, the variance is inversely proportional to the computational cost.

As in \cite{PSGK2012} and \cite{DPDK2015}, our target is the asymptotic variance. Both of these articles make Assumption \ref{ass.indepNoise} and analyse $\Ptilpm$ rather than $\Ppm$. \cite{DPDK2015} extends \cite{PSGK2012} from the independence sampler to a general Metropolis--Hastings kernel.  To do this
it considers the \emph{jump chain} which is the set of accepted values and the number of iterations the chain stays at that value: $\{\theta_*^i,\tau^i\}_{i=1}^\infty$. The asymptotic variance of the jump chain for $\Ptilpm$ is related to that of $\Ptilpm$ itself, giving an intractable expression for the latter. Bounds on this expression are provided, and in the special case of Assumption \ref{ass.lognormalCLT} these require only two numerical integrations in order to evaluate. The more generally applicable upper bound is (for $h$ with $\Vars{\pi}{h}=1$):
\[
1+\asVar_{\Ppm}(h)\le
\{1+\asVar_{\PMH}(h)\}
\left[\Expects{\nu}{1/\alphabar_w(W)}+
  (1-\phi_*)\{\Expects{\nu}{1/\alphabar_w(W)}-1/\Expects{\nu}{\alphabar_w(W)}\}
  \right],
\]
where $\alphabar_w(W)=\int_{\cW}\qtil(w') \{1\wedge w'/W\}\md w'$ and $\phi_*$ is the lag-1 autocorrelation of $1/\alphabar_w(W)$ under the jump chain whose next move is to $w'$ with a density of $\qtil(w')\{1\wedge w'/w\}/\alphabar_w(w)$. The minimiser, $\sigma_{opt}$, of the computational inefficiency of the bound (the bound multiplied by $\sigma^2$) depends on $\asVar_{\PMH}(h)$. When $\asVar_{\PMH}(h)=1$, $\sigma_{opt}\approx 0.92$ and as $\asVar_{\PMH}(h)\uparrow \infty$, $\sigma_{opt}\to 1.68$. A value of $\sigma_{opt}\approx 1.2$ is found to minimise the maximum penalty across all values for $\asVar_{\PMH}(h)$. 

\cite{DelLee2018} investigates which functions of the pseudo-marginal chain have a finite asymptotic variance. As with Deligiannidis et al (2015), this is approached via the jump chain. The following quantity is central:
\begin{equation}
  \label{eqn.definesbar}
  \overline{s}=\pi_{\mathsf{ess} \sup} \int_{\cW} w^2 q_\theta(w) \md w.
\end{equation}
If $\overline{s}<\infty$ and $\PMH$ is variance bounding then $f\in L^2(\pi)$ has a finite asymptotic variance. In the special case of Assumption \ref{ass.indepNoise}, to achieve finite $\asVar_{\Ppm}(f)$, it is found that only the jump chain need be variance bounding as long as $\asVar_{\PMH}(f)<\infty$, too.

\subsection{Examples}

We provide two simple motivating examples. The first makes it clear that tuning according to  $\Var{\log W}$ cannot be the correct general advice since this quantity is not even defined. The second has $\Var{\log W}<\infty$ even though $\Var{W}=\infty$ and (hence) the asymptotic variance is infinite; it will be revisted in Sections \ref{sec.corrPMMH} and in \ref{sec.num.heavy} where the use of a correlated pseudo-marginal algorithm leads to a finite asymptotic variance.

\subsubsection{Complete exact observations}
\label{sec.exactObs}
Consider the simplest case of complete, exact observations, and where for each inter-observation interval we use $n$ samples. For the $t$th interval, let $p_t$ be the probability of success (hitting the observation) and let $S_t$ be the number of successes. Then the estimator of the $t$th transition probability is $\Phat_t=S_t/n$, and $\Phat=\prod_{t=1}^T \Phat_t$ is an unbiased estimator of the likelihood. 
For each $t$, $\Prob{\Phat_t=0}=(1-p_t)^n$, so
$\Prob{\Phat=0}=1-\prod_{t=1}^T\left\{1-(1-p_t)^n\right\}>0$.
Thus $\Expect{\log \Phat}=-\infty$ and $\Var{\log \Phat}$ is undefined.

Similar examples that use particle filters and yet have $\Var{\log \Phat}$ undefined include particle filters for partial but precise observations and ABC-particle MCMC, where at time $t$, particles are kept if their summary statistics fall within some $\delta$ of the summary statistic of the observation. 

\subsubsection{A shifted Pareto distribution}
\label{sec.shiftedPareto}
Let
\begin{equation}
  \label{eqn.heavy}
\qtil_W(w;a)=\frac{a}{(1+w)^{1+a}}~~~(w\ge 0,~a>1).
\end{equation}
  For $a>1$, this has $\Expects{\qtil}{W}=1/(a-1)<\infty$ but for $a\le 2$, $\Vars{\qtil}{W}=\infty$.
  However $V:=\log W$ has a density of
  \[
f_V(v)=\frac{a\exp(v)}{\{1+\exp(v)\}^{1+a}},
\]
so all polynomial moments of $\log W$ are finite. In particular, when $a=2$, $\Expect{W}=1$ and numerical integration gives $\Var{\log W}\approx 2.29$. So, for two independent realisations, $W_1$ and $W_2$ from this distribution, $\Var{\log\{(W_1+W_2)/2\}}\approx 1.14$, fitting with the current tuning advice.

\section{Bounds on the asymptotic variance}
\label{sec.VarBounds}

This section provides quantitative upper and lower bounds on the asymptotic variance of $\Ppm$ which make clear that in many circumstances, subject to Assumption \ref{ass.indepNoise}, $\Expects{\qtil}{W^2}<\infty$ is both a necessary and sufficient condition for a finite asymptotic variance of ergodic averages of $h\in L^2(\pi\times \nu)$. Indeed, Theorem \ref{thrm.necessary} makes clear that the behaviour of $W^2$ is important even without Assumption \ref{ass.indepNoise}.

\subsection{Upper and lower bounds on the asymptotic variance}
\label{sec.TheBounds}
Our central upper bound is:
\begin{theorem}
\label{theorem.varboundA}
If $\PMH$ has a right-spectral gap of $\epsMH$ and Assumption \ref{ass.indepNoise} holds for $\Ppm$ then for any $h_*\in L^2(\pi)$, $\asVar_{\Ppm}(h_*) \le \{2R_S^{\qtil}/\epsMH-1\} \|h_*\|_{L^2(\pi)}$, where 
  \begin{equation}
    \label{eqn.aVarUB}
 R_S^{\qtil} :=
  \iint_{\cW\times \cW} \qtil(w)\qtil(w') ww'(w\vee w')\md w \md w'.
\end{equation}
\end{theorem}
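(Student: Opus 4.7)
The plan is to bound $\asVar_{\Ptilpm}(h_*)$ via the variational representation \eqref{eqn.varRepvar} and then transfer the result to $\Ppm$ using the ordering $\asVar_{\Ppm}(h_*)\le \asVar_{\Ptilpm}(h_*)$ already noted in the paper (since $\alphatil_{pm}\le \alpha_{pm}$). Viewing $h_*$ as a function on $\cX\times \cW$ that is constant in $w$, I apply \eqref{eqn.varRepvar} with test functions $g\in L_0^2(\pi\times\nu)$ and work with the projection $\tilde g(\theta):=\int_{\cW}g(\theta,w)\nu(w)\md w$. Because $h_*$ has no $w$-dependence, $\langle h_*,g\rangle=\langle h_*,\tilde g\rangle_{L^2(\pi)}$, and $g\in L_0^2(\pi\times\nu)$ forces $\tilde g\in L_0^2(\pi)$; the task reduces to a Poincar\'e-type comparison between $\cE_{\Ptilpm}(g)$ and $\|\tilde g\|^2_{L^2(\pi)}$.

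The core step is the identity
\[
\nu(w)\nu(w')=(w\vee w')\cdot \nu(w)\qtil(w')\{1\wedge w'/w\},
\]
which follows from $\nu(w)=w\qtil(w)$ together with $w\cdot (1\wedge w'/w)=w\wedge w'$ and $(w\wedge w')(w\vee w')=ww'$. Writing $\tilde g(\theta')-\tilde g(\theta)=\iint_{\cW\times\cW}[g(\theta',w')-g(\theta,w)]\nu(w)\nu(w')\md w\md w'$ and applying Cauchy--Schwarz with the handicapped weight $W(w,w'):=\nu(w)\qtil(w')\{1\wedge w'/w\}$ yields
\[
[\tilde g(\theta')-\tilde g(\theta)]^2 \le \left(\iint [g(\theta',w')-g(\theta,w)]^2\, W\,\md w\md w'\right)\left(\iint (w\vee w')^2\, W\,\md w\md w'\right).
\]
The second factor equals exactly $R_S^{\qtil}$ because $(w\vee w')^2(w\wedge w')=ww'(w\vee w')$. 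Multiplying through by $\tfrac{1}{2}\pi(\theta)q(\theta'|\theta)\{1\wedge r(\theta,\theta')\}$ and integrating over $(\theta,\theta')$ upgrades this pointwise inequality to $\cE_{\PMH}(\tilde g)\le R_S^{\qtil}\,\cE_{\Ptilpm}(g)$.

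To finish, I combine the spectral-gap hypothesis $\cE_{\PMH}(\tilde g)\ge \epsMH\|\tilde g\|^2_{L^2(\pi)}$ with ordinary Cauchy--Schwarz $\langle h_*,\tilde g\rangle_\pi \le \|h_*\|_{L^2(\pi)}\|\tilde g\|_{L^2(\pi)}$ and substitute into \eqref{eqn.varRepvar}. The summand becomes a downward parabola in $x=\|\tilde g\|_{L^2(\pi)}$:
\[
4\langle h_*,g\rangle-2\cE_{\Ptilpm}(g)\le 4\|h_*\|_{L^2(\pi)}\,x-\tfrac{2\epsMH}{R_S^{\qtil}}x^2 \le \frac{2R_S^{\qtil}}{\epsMH}\|h_*\|^2_{L^2(\pi)},
\]
the last step being the elementary maximisation over $x\ge 0$. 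Subtracting $\|h_*\|^2_{L^2(\pi)}=\|h_*\|^2_{L^2(\pi\times\nu)}$, as required by the variational formula, delivers the stated bound.

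The hard part is recognising the factorisation $\nu(w)\nu(w')=(w\vee w')W(w,w')$; once it is in hand, Cauchy--Schwarz collapses the entire ``noise'' integration into the single explicit constant $R_S^{\qtil}$, making the role of the second moment of $W$ completely transparent. Anything cruder---e.g.\ projecting against $\qtil$ instead of $\nu$, or splitting $\{g(\theta',w')-g(\theta,w)\}^2$ into $\theta$- and $w$-parts through a triangle-type bound---either produces a worse constant or forces tracking of an intractable cross-term between the two coordinates.
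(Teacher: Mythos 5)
Your proof is correct, and it reaches the bound by a genuinely different route from the paper. The paper works pointwise in the noise pair $(w,w')$: it keeps the whole family $g_w(\theta)=g(\theta,w)$, uses an algebraic symmetrisation lemma to turn $\{g_{w'}(\theta')-g_w(\theta)\}^2$ into a Dirichlet form for $g_w+g_{w'}$, applies the spectral gap to $g_w+g_{w'}$ for every pair $(w,w')$, and only then completes the square inside the $(w,w')$ integral, where $w^2{w'}^2/(w\wedge w')=ww'(w\vee w')$ produces $R_S^{\qtil}$. You instead collapse $g$ to its noise average $\tilde g(\theta)=\int_{\cW} g(\theta,w)\nu(w)\,\md w$ and exploit the factorisation $\nu(w)\nu(w')=(w\vee w')\,\qtil(w)\qtil(w')(w\wedge w')$ in a weighted Cauchy--Schwarz over the noise variables, giving the clean Poincar\'e-type comparison $\cE_{\PMH}(\tilde g)\le R_S^{\qtil}\,\cE_{\Ptilpm}(g)$ (here $(w\vee w')^2(w\wedge w')=ww'(w\vee w')$ plays the role that $w^2{w'}^2/(w\wedge w')$ plays in the paper); a single application of the spectral gap to $\tilde g$ and one scalar quadratic maximisation then finish the job, and the transfer from $\Ptilpm$ to $\Ppm$ via the Peskun--Tierney ordering is legitimate (the paper instead bounds $\cE_{\Ppm}\ge\cE_{\Ptilpm}$ inside the variational formula, which is equivalent). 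All the small supporting facts you need check out: $\tilde g\in L_0^2(\pi)$ by Jensen and mean-zero-ness, and $\langle h_*,g\rangle_{L^2(\pi\times\nu)}=\langle h_*,\tilde g\rangle_{L^2(\pi)}$ because $h_*$ is constant in $w$. What each approach buys: yours is shorter for this theorem, avoids Lemma~\ref{lem.rearrange.square} entirely, and makes transparent that the result is really a comparison of Dirichlet forms; the paper's pointwise treatment of $g_w+g_{w'}$ is what generalises to $w$-dependent test functions $h(\theta,w)$ (Theorem~\ref{theorem.varboundB}) and to correlated proposals (Theorem~\ref{theorem.varboundCorrPM}), settings in which your projection onto $\tilde g$ would discard information because $\langle h,g\rangle$ no longer reduces to an inner product with $\tilde g$. (Minor cosmetic point, shared with the paper: the final bound multiplies $\|h_*\|^2_{L^2(\pi)}$, so the norm in the theorem statement should be read as squared.)
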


We will generalise the bound of Theorem \ref{theorem.varboundA} in several different different directions, with proofs placed in the appendices. However, since the fundamental argument is straightforward and illuminates the key ideas, we prove Theorem \ref{theorem.varboundA} itself in Section \ref{sec.proveMainTheorem} of the main text.

Firstly, Theorem \ref{theorem.varboundA} is as tight as could be hoped for when there is no multiplicative noise; \emph{i.e.} when $\nu(w)\equiv \qtil(w)=\delta(w-1)$, $R^{\qtil}_1=1$ and we obtain the standard spectral gap bound \eqref{eqn.stdSpecGapBd}. Secondly, since $(w+w')/2\le w\vee w'\le w+w'$
\begin{equation}
  \label{eqn.tractableGen}
  \int_{\cW}\qtil(w) w^2 \md w \le R^{\qtil}_1
  \le 2\int_{\cW}\qtil(w) w^2 \md w,
\end{equation}
so $R^{\qtil}_1<\infty \Leftrightarrow \int_{\cW}\qtil(w) w^2 \md w<\infty$. In particular, if $\PMH$ is variance bounding then subject to Assumption \ref{ass.indepNoise}, $\Expects{\qtil}{W^2}<\infty$ is a sufficient condition for $\asVar_{\Ppm}(h_*)$ to be variance bounding for all $h_* \in L^2(\pi)$.

Proposition 4 of \cite{DelLee2018} also shows that $\asVar_{\Ppm}<\infty$ requires $\Expects{\qtil}{W^2}<\infty$ but it needs only that the jump chain of $\PMH$ be variance bounding and that $\asVar_{\Ppm}(h_*)<\infty$. The novel contribution from Theorem \ref{theorem.varboundA} is the simple, explicit upper bound that makes the importance of $\Expects{\qtil}{W^2}$ crystal clear.

Given Assumption \ref{ass.indepNoise} and a right spectral gap for the Metropolis--Hastings kernel, $\Expects{\qtil}{W^2}<\infty$ is sufficient for variance bounding, but perhaps a weaker condition would suffice? We now provide an explicit lower bound, a corollary of which is that $\Expects{\qtil}{W^2}<\infty$ is often also necessary. 

We require the following quotient of the marginal proposal density when the chain is stationary and the stationary density:
\begin{align}
  \label{eqn.define.rbar}
    r(\theta)&:=\Expects{\theta'\sim q(\cdot|\theta)}{r(\theta,\theta')}
    =
\frac{1}{\pi(\theta)}\int \pi(\theta')q(\theta|\theta') \md \theta'.
\end{align}
We also require the existence of a left-spectral gap for $\Ppm$: $\leftGapPM>0$.

This ensures that the pseudo-marginal kernel has no fully periodic component. It holds automatically, with $\leftGapPM=1$, for any positive kernel (that is, kernel with a non-negative spectrum) such as for pseudo-marginal algorithms based on the independence sampler or the random-walk Metropolis with a Gaussian proposal. Any other kernel can be modified to have a spectral gap of at least $\delta<1$ by setting $P^{\mathrm{lazy}}_{\mathrm{pm}}=\delta \mathrm{Id} + (1-\delta) \Ppm$.

Theorem \ref{thrm.necessary} is proved in Appendix \ref{sec.prove.Necessary}. The central argument is that in the presence of a left spectral gap the contribution of auto-correlations with odd-numbered lags is bounded below; auto-correlations with  even-numbered lags are lower bounded by a function of the probability that the chain still has not moved from its starting position by that lag. The theorem does \emph{not} require Assumption \ref{ass.indepNoise}.

\begin{theorem}
  \label{thrm.necessary}
  Let $r(\theta)$ be as defined in \eqref{eqn.define.rbar}. If $\Ppm$ has a left spectral gap of $\leftGapPM$ then for all $h\in L^2(\pi\times \nu)$,
  \[
  \asVar_{\Ppm}(h)
  \ge
\left\{-\frac{1}{\leftGapPM}+\frac{\leftGapPM}{4-2\leftGapPM}\right\}\|h\|^2_{L^2(\pi\times \nu)}+  \frac{1}{2}\int \pi(\theta) \qtil_{\theta}(w) h(\theta,w)^2 \frac{w^2}{r(\theta)}\md w \md \theta.
\]
\end{theorem}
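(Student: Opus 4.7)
The plan is to work from the auto-covariance expansion
\begin{equation*}
\asVar_{\Ppm}(h) = -\|h_0\|^2 + 2\sum_{k=0}^{\infty} \langle h_0, \Ppm^k h_0\rangle,
\end{equation*}
where $h_0 = h - \int h\,\md(\pi\times\nu)$ and all inner products and norms are in $L^2(\pi\times\nu)$, and, following the sketch the authors give just before the theorem, split the sum by the parity of the lag and treat the two halves with different tools. Since $\Ppm$ preserves $L^2_0(\pi\times\nu)$ and is reversible, one has $\langle h_0,\Ppm^{2j}h_0\rangle = \|\Ppm^j h_0\|^2 \ge 0$ for every even lag, while the left spectral gap applied to $\Ppm^j h_0 \in L^2_0$ gives the odd-lag bound $\langle h_0,\Ppm^{2j+1}h_0\rangle = \langle \Ppm^j h_0,\Ppm\,\Ppm^j h_0\rangle \ge (\leftGapPM-1)\|\Ppm^j h_0\|^2$. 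Summing over $j$ yields
\begin{equation*}
\asVar_{\Ppm}(h) \ge -\|h_0\|^2 + 2\leftGapPM \sum_{j\ge 0}\|\Ppm^j h_0\|^2.
\end{equation*}

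The decisive lower bound on $\sum_j \|\Ppm^j h_0\|^2$ comes from the pseudo-marginal rejection structure. Applying $1\wedge x\le x$ to the acceptance probability, then integrating out the proposed $(\theta',w')$ via $\int \qtil_{\theta'}(w')w'\md w' = 1$ and the definition of $r(\theta)$, gives $1-\beta(\theta,w)\le r(\theta)/w$, where $\beta(\theta,w)$ is the rejection probability at $(\theta,w)$; notably this step uses nothing beyond unbiasedness of $\pihat$, so Assumption~\ref{ass.indepNoise} is not needed. Consequently $\Ppm^{2j}((\theta,w),\{(\theta,w)\})\ge \beta(\theta,w)^{2j}$, and writing $\Ppm^{2j}(x,\md y)=\beta(x)^{2j}\delta_x(\md y)+R_{2j}(x,\md y)$ splits $\|\Ppm^j h_0\|^2=\langle h_0,\Ppm^{2j}h_0\rangle$ into a non-negative diagonal contribution $\int \pi\nu\,\beta^{2j}h_0^2$ and a bilinear form $\iint \pi(\md\theta)\nu_\theta(\md w) R_{2j} h_0(x)h_0(y)$ that is symmetric in its two arguments by reversibility of $\Ppm$ and has total marginal mass $\int \pi\nu (1-\beta^{2j})$. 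Summing the diagonal via $\sum_j\beta(x)^{2j}=1/(1-\beta(x)^2)$, combined with the elementary estimate $1-\beta^2=(1-\beta)(1+\beta)\le 2(1-\beta)\le 2r(\theta)/w$ and the identity $\nu_\theta(\md w)=w\qtil_\theta(\md w)$, converts the diagonal sum into $\tfrac{1}{2}\int \pi(\theta)\qtil_\theta(w)h_0^2 w^2/r(\theta)\,\md w\,\md\theta$, which matches the integral in the theorem statement.

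The main obstacle will be controlling the off-diagonal bilinear form. The naive Cauchy--Schwarz bound $|\iint \pi\nu\,R_{2j}(\cdot,\md y)h_0(x)h_0(y)|\le \int \pi\nu(1-\beta^{2j})h_0^2$ leaves a $-\|h_0\|^2$-scale penalty at each $j$ that sums to $-\infty$, so a direct summation is fatal. My plan is to introduce a weight parameter in the Cauchy--Schwarz step and optimise it $j$-by-$j$ against the trivial $\|\Ppm^j h_0\|^2\le \|h_0\|^2$; the resulting corrections should form a finite series controlled by the left-gap factor, yielding an aggregate loss of the form $\gamma(\leftGapPM)\|h_0\|^2$ for some explicit $\gamma$. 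Combining this with the $-\|h_0\|^2$ headline term and the $2\leftGapPM$ prefactor should produce precisely the coefficient $-1/\leftGapPM+\leftGapPM/(4-2\leftGapPM)$ appearing in the theorem, with the $4-2\leftGapPM$ denominator arising from the Dirichlet-form estimate $\cE_{\Ppm}(h_0)\le (2-\leftGapPM)\|h_0\|^2$ that governs how much damping the Cauchy--Schwarz optimisation can provide. Passing from $\|h_0\|^2$ to $\|h\|^2$ at the very end is immediate since $\|h_0\|^2\le \|h\|^2$ and the integral term depends on $h^2$ rather than $h_0^2$.
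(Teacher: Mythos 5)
Your overall skeleton (split by parity of the lag, use the left gap for odd lags, relate even lags to the probability of never accepting, bound $\alpha_{pm}(\theta,w)\le r(\theta)/w$ using only unbiasedness, and sum the geometric series to produce $\tfrac12\int\pi(\theta)\qtil_\theta(w)h^2w^2/r(\theta)\,\md w\,\md\theta$) matches the paper's, and those preliminary estimates are sound. But the proof is not complete, and the gap is exactly where you flag it: you never control the signed off-diagonal part of $\langle h_0,\Ppm^{2j}h_0\rangle$ after splitting off the atom $\beta(x)^{2j}\delta_x$. The proposed remedy (a weighted Cauchy--Schwarz optimised $j$-by-$j$ against $\|\Ppm^jh_0\|^2\le\|h_0\|^2$) is only a hope: no inequality is exhibited, and nothing indicates it would deliver the stated constants. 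The paper never decomposes $\Ppm^{2j}$ into diagonal plus remainder; instead it uses reversibility to write $\rho^h_{2j}\|h\|^2=\|\Ppm^jh\|^2=\Expect{\Expect{h(X_j)\mid X_0}^2}$ and lower-bounds this by $\Expect{h(X_0)^2\,\Prob{X_j=X_0\mid X_0}^2}$ (its Proposition on even lags), then by the probability of rejecting all $j$ proposals, so the cross term you worry about never has to be bounded separately --- your obstacle is precisely the point that proposition is invoked to dispose of.

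Even granting that the off-diagonal part could be discarded at no cost, your parity pairing cannot give the stated bound. Pairing lag $2j$ with lag $2j+1$ via $\langle h_0,\Ppm^{2j+1}h_0\rangle\ge(\leftGapPM-1)\|\Ppm^jh_0\|^2$ yields $\asVar_{\Ppm}(h)\ge-\|h_0\|^2+2\leftGapPM\sum_j\|\Ppm^jh_0\|^2$, so the key integral would enter with prefactor $2\leftGapPM\times\tfrac12=\leftGapPM$ rather than the theorem's $\tfrac12$; for $\leftGapPM<1/2$ (the typical case) this is strictly weaker than the claimed inequality whenever the integral is large, which is exactly the regime the theorem is aimed at, and there is no visible route from it to the coefficient $-1/\leftGapPM+\leftGapPM/(4-2\leftGapPM)$. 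The paper avoids this by bounding the odd-lag sum on its own through the spectral representation $\rho^h_k=\int\lambda^k\eta^h(\md\lambda)$ with $\eta^h$ supported in $[\leftGapPM-1,1)$, giving $\sum_j\rho^h_{2j+1}\ge-(1-\leftGapPM)/\{2\leftGapPM-(\leftGapPM)^2\}$, a finite constant that is not charged against the (possibly divergent) even-lag sum; the even-lag contribution then keeps its full weight, with a truncation $1(W\le\sqrt{k})$ and Fatou's lemma handling possible divergence. Finally, your closing claim that ``the integral term depends on $h^2$ rather than $h_0^2$'' contradicts your own derivation, which produces $h_0^2$; the two integrals are not comparable in general, so the passage from $h_0$ to $h$ requires the same care as in the paper, whose argument is carried out for centred $h$.
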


\begin{corollary}
Suppose that Assumption \ref{ass.indepNoise} holds, that $\Ppm$ has a left-spectral gap of $\leftGapPM$, that  $h(\theta,w)=h_*(\theta)\in L^2(\pi)$  and for $\pi$-almost all $\theta$, $r(\theta)\le c$ for some $c<\infty$. Then
\[
  \asVar_{\Ppm}(h)
  \ge
-\frac{1}{\leftGapPM}\|h_*\|^2_{L^2(\pi)}
  +\frac{1}{2c}\|h_*\|^2_{L^2(\pi)} \int \qtil(w) w^2 \md w.
  \]
  So $\Expects{\qtil}{W^2}=\infty\implies \asVar_{\Ppm}(h)=\infty$.
\end{corollary}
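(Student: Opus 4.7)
The corollary is essentially a matter of specialising the bound of Theorem \ref{thrm.necessary} to the case at hand, so the plan is to unpack each term and apply the three hypotheses (Assumption \ref{ass.indepNoise}, the bound $r(\theta)\le c$, and $h(\theta,w)=h_*(\theta)$) one at a time. Under Assumption \ref{ass.indepNoise} the conditional density $\qtil_\theta(w)$ does not depend on $\theta$, so $\nu_\theta(w)=w\qtil(w)$ also loses its $\theta$-dependence, and unbiasedness gives $\int w\qtil(w)\md w=1$. I would first compute $\|h\|^2_{L^2(\pi\times\nu)}=\int\pi(\theta)h_*(\theta)^2\md\theta\cdot\int w\qtil(w)\md w=\|h_*\|^2_{L^2(\pi)}$, so that the multiplicative norm in Theorem \ref{thrm.necessary} collapses to $\|h_*\|^2_{L^2(\pi)}$.

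Next I would handle the two additive pieces of the theorem's lower bound separately. The first piece is $\{-1/\leftGapPM+\leftGapPM/(4-2\leftGapPM)\}\|h\|^2_{L^2(\pi\times\nu)}$; since $0<\leftGapPM\le 1$, the second summand in the braces is nonnegative, so dropping it yields the weaker lower bound $-(1/\leftGapPM)\|h_*\|^2_{L^2(\pi)}$, which is exactly the first term claimed in the corollary. The second piece is
\[
\frac{1}{2}\int\pi(\theta)\qtil_\theta(w)h(\theta,w)^2\frac{w^2}{r(\theta)}\md w\md\theta
=\frac{1}{2}\left(\int\pi(\theta)\frac{h_*(\theta)^2}{r(\theta)}\md\theta\right)\left(\int\qtil(w)w^2\md w\right),
\]
where the separation uses Assumption \ref{ass.indepNoise} together with $h(\theta,w)=h_*(\theta)$. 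The hypothesis $r(\theta)\le c$ for $\pi$-almost every $\theta$ gives $1/r(\theta)\ge 1/c$ $\pi$-a.e., so the first factor is at least $(1/c)\|h_*\|^2_{L^2(\pi)}$; combining yields the second term in the stated inequality.

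Summing the two lower bounds produces exactly the displayed bound for $\asVar_{\Ppm}(h)$. The final implication is then immediate: if $\|h_*\|^2_{L^2(\pi)}>0$ and $\int\qtil(w)w^2\md w=\infty$, the right-hand side is $+\infty$, hence $\asVar_{\Ppm}(h)=\infty$ (the case $\|h_*\|_{L^2(\pi)}=0$ is trivial). There is no real obstacle here; the corollary is a bookkeeping exercise built on top of Theorem \ref{thrm.necessary}, and the only subtle point is remembering that $\leftGapPM/(4-2\leftGapPM)\ge 0$ so that discarding it genuinely weakens, rather than strengthens, the bound.
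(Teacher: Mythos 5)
Your proposal is correct and follows exactly the route the paper intends: the corollary is a direct specialisation of Theorem \ref{thrm.necessary}, collapsing $\|h\|^2_{L^2(\pi\times\nu)}$ to $\|h_*\|^2_{L^2(\pi)}$ via $\int w\qtil(w)\,\md w=1$, discarding the nonnegative term $\leftGapPM/(4-2\leftGapPM)$, and bounding $1/r(\theta)\ge 1/c$ in the factored integral. The only nitpick is your justification that $\leftGapPM\le 1$ (the left gap can exceed $1$ for positive kernels); what you actually need, and what holds, is $0<\leftGapPM<2$, which already makes the dropped term nonnegative.
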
  

The condition that $r(\theta)\le c<\infty$ is mild. For example, in the case of the random-walk Metropolis with $\theta'\sim \mathsf{N}(\theta,\lambda^2 I_d)$, it is straightforward to show that it holds provided the most positive eigenvalue of the Hessian of $\log \pi(\theta)$ is ($\pi$-almost everywhere) less than $1/\lambda^2 - \epsilon$ for some $\epsilon>0$. This is guaranteed, for example, if $\pi$ is log-concave.  

\cite{DelLee2018} examines the Metropolis-Hastings independence sampler (MHIS) and, in its Proposition 3, gives the necessary and sufficient condition for a finite asymptotic variance of $h$ to be:
\[
\int_{\cX} \pi(\theta)\qtil_\theta(w) w^2\frac{\pi(\theta)}{q(\theta)}h(\theta,w)^2~\md \theta \md w<\infty.
\]
For the MHIS, $r(\theta)=q(\theta)/\pi(\theta)$, so the (necessary part of) the condition is equivalent to ours. If $\mathsf{P}_{\mathrm{MHIS}}$  is a geometrically ergodic independence sampler then $\pi(\theta)/q(\theta)>c$ for some $c>0$ \cite[e.g.][]{RobRos2011}. Thus, if $\qtil_{\theta}=\qtil$ and $h(\theta,w)=h_*(\theta)$, we again find that $\asVar_{\Ppm}=\infty$ if either $\int_\cW w^2 \qtil(w) \md w=\infty$ or $\|h_*\|_{L^2(\pi)}=\infty$; see also \cite{DelLee2018}, Corollary 3.

\subsection{Proof of Theorem \ref{theorem.varboundA}}
\label{sec.proveMainTheorem}
The proof proceeds in three stages: first we lower bound the Dirichlet form for $g$; second we upper bound $\langle h_*,g\rangle_{L^2(\pi\times \nu)}$; finally we combine the two through \eqref{eqn.varRepvar}. Unless explicitly stated otherwise, all integrals with respect to $\theta$ and/or $\theta'$ are over $\cX$ and all integrals with respect to $w$ and/or $w'$ are over $\cW$.

All stages use the following natural construction: for any function $g(\theta,w)\in L^2(\pi\times \nu)$ and for each $w \in \cW$ we identify the function $g_w:\cX\to \mathbb{R}$ as
\begin{equation}
  \label{eqn.define.gw}
  g_w(\theta)=g(\theta,w),
\end{equation}
We note the following natural inheritance (proof in Appendix \ref{sec.proof.prop.LsqInherit} for completeness):

\begin{prop}
  \label{prop.LsqInherit}
For every $g(\theta,w)\in L^2(\pi\times \nu)$,  $\nu$ almost every $g_w(\theta)\in L^2(\pi)$, where $g_w$ is defined as in \eqref{eqn.define.gw}. 
\end{prop}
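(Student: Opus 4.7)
The plan is a direct application of Tonelli's theorem, and I do not anticipate any real obstacle: the statement is essentially a textbook consequence of the definition of the product $L^2$ space.

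First I would unpack the hypothesis: $g\in L^2(\pi\times\nu)$ means
\[
\|g\|^2_{L^2(\pi\times\nu)}=\iint_{\cX\times\cW} g(\theta,w)^2\,\pi(\theta)\,\nu(w)\,\md\theta\,\md w<\infty.
\]
Since the integrand is non-negative and the measures $\pi(\theta)\md\theta$ and $\nu(w)\md w$ are $\sigma$-finite, Tonelli's theorem permits me to swap the order of integration and write
\[
\|g\|^2_{L^2(\pi\times\nu)}=\int_{\cW}\nu(w)\left[\int_{\cX}g_w(\theta)^2\,\pi(\theta)\,\md\theta\right]\md w=\int_{\cW}\nu(w)\,\|g_w\|^2_{L^2(\pi)}\,\md w.
\]

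From this identity the conclusion follows immediately. The function $w\mapsto \|g_w\|^2_{L^2(\pi)}$ is non-negative and measurable (another application of Tonelli), and its integral against $\nu$ is finite. Any non-negative measurable function with finite integral must be finite almost everywhere with respect to the measure used, because otherwise it would be infinite on a set of positive $\nu$-measure and the integral would diverge. Hence $\|g_w\|^2_{L^2(\pi)}<\infty$ for $\nu$-almost every $w$, which is exactly the assertion that $g_w\in L^2(\pi)$ for $\nu$-almost every $w$. This completes the proof.
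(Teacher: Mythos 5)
Your proof is correct and is essentially the paper's argument: both reduce to writing $\|g\|^2_{L^2(\pi\times\nu)}$ as an iterated integral of $\|g_w\|^2_{L^2(\pi)}$ against $\nu$ and concluding that a non-negative function with finite integral is finite $\nu$-a.e. (the paper phrases this as a contradiction on a non-null exceptional set, you phrase it directly, with Tonelli made explicit). No gap; nothing further needed.
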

    
\textbf{Stage 1}: We first bound the Dirichlet form for $g$. Recalling that $\nu(w)=w\qtil(w)$, this is
\[
  \cE_{\Ppm}(g)=
  \frac{1}{2}\int \pi(\theta) w\qtil(w) q(\theta'|\theta)\qtil(w')
  \alpha_{pm}(\theta,w;\theta',w') \{g(\theta',w')-g(\theta,w)\}^2\md \theta \md \theta' \md w \md w'.
\]
Since $\alpha_{pm}(\theta,w;\theta',w')\ge\alphatil_{pm}(\theta,w;\theta',w') \ge \{1\wedge w'/w\}\{1\wedge r(\theta,\theta')\}\equiv \{1\wedge w'/w\}\alpha_{MH}(\theta,\theta')$,
\begin{align}
  \nonumber
  \cE_{\Ppm}(g)
  \ge \cE_{\Ptilpm}(g)
  &=
\frac{1}{2}\int \pi(\theta) w\qtil(w) q(\theta'|\theta)\qtil(w')
\alphatil_{pm}(\theta,w;\theta',w') \{g(\theta',w')-g(\theta,w)\}^2\md \theta \md \theta' \md w \md w'\\
&=\frac{1}{2}\int \qtil(w)\qtil(w') (w\wedge w')D(w,w') \md w \md w',
\label{eqn.defineEPtil}
\end{align}
where
\[
D(w,w'):=\int \pi(\theta) q(\theta'|\theta)\alpha_{MH}(\theta,\theta') \{g_{w'}(\theta')-g_w(\theta)\}^2 \md \theta \md \theta'.
\]
We would like to use the Dirichlet form for $\PMH$ to simplify $D$, but we cannot since the function acting on $\theta'$ is not the same as the function acting on $\theta$. The issue is resolved by the following, whose proof is purely algebraic manipulation and is given in Appendix \ref{sec.proof.lem.rearrange.square}:
\begin{lemma}
  \label{lem.rearrange.square}
  For any two functions $f,g:\cX\to \mathbb{R}$ and a third function $z:\cX\times \cX \to [0,\infty)$ with $z(x,y)=z(y,x)$,
  \[
\iint_{\cX^2} z(x,y)\{f(x)-g(y)\}^2\md x \md y \ge \frac{1}{4} \iint_{\cX^2}z(x,y)[\{f(x)+g(x)\}-\{f(y)+g(y)\}]^2\md x \md y. 
  \]
\end{lemma}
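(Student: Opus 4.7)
My plan is to exploit the symmetry $z(x,y)=z(y,x)$ to symmetrise the left-hand side, and then apply the elementary inequality $a^2+b^2\ge \tfrac12(a+b)^2$ with a clever choice of $a$ and $b$.

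First I would rename $x\leftrightarrow y$ in the integral on the left-hand side. Because $z$ is symmetric, this yields
\[
\iint_{\cX^2} z(x,y)\{f(x)-g(y)\}^2\md x\md y
=
\iint_{\cX^2} z(x,y)\{f(y)-g(x)\}^2\md x\md y,
\]
so twice the LHS equals
\[
\iint_{\cX^2} z(x,y)\bigl[\{f(x)-g(y)\}^2+\{g(x)-f(y)\}^2\bigr]\md x\md y.
\]

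Next, I would apply the pointwise bound $a^2+b^2\ge\tfrac12(a+b)^2$ (equivalent to $(a-b)^2\ge 0$) with $a=f(x)-g(y)$ and $b=g(x)-f(y)$. The point of this choice is that the cross terms collapse: $a+b=\{f(x)+g(x)\}-\{f(y)+g(y)\}$, which is precisely the expression appearing in the right-hand side. Since $z\ge 0$, integrating this pointwise inequality gives
\[
\iint_{\cX^2} z(x,y)\bigl[\{f(x)-g(y)\}^2+\{g(x)-f(y)\}^2\bigr]\md x \md y
\ge
\frac{1}{2}\iint_{\cX^2} z(x,y)\bigl[\{f(x)+g(x)\}-\{f(y)+g(y)\}\bigr]^2\md x \md y.
\]

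Finally, dividing by two and combining with the symmetrisation step gives the claimed inequality. There is no real obstacle here: the proof is purely algebraic, and the only insight is to notice that the ``wrong pairing'' $g(x)-f(y)$ (which the symmetry of $z$ makes available for free) combines with $f(x)-g(y)$ to produce the sum $f+g$ differenced between $x$ and $y$.
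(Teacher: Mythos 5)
Your proof is correct and is essentially the paper's argument in reverse order: the paper bounds the right-hand integrand via $(a-b)^2\le 2(a^2+b^2)$ with $a=f(x)-g(y)$, $b=f(y)-g(x)$ and then uses the symmetry of $z$ to merge the two terms, which is exactly your $a^2+b^2\ge\tfrac12(a+b)^2$ step combined with your symmetrisation of the left-hand side. No gaps; nothing further needed.
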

Letting $z(\theta,\theta')=\pi(\theta)q(\theta'|\theta)\alpha_{MH}(\theta,\theta')=z(\theta',\theta)$, applying Lemma \ref{lem.rearrange.square} and writing $(g_w+g_w')$ for the function $\theta:\to g_w(\theta)+g_{w'}(\theta)$, we obtain
\begin{align*}
D(w,w')
&\ge
\frac{1}{4}\int \pi(\theta)q(\theta'|\theta) \alpha_{MH}(\theta,\theta')\{(g_w+g_{w'})(\theta')-(g_w+g_{w'})(\theta)\}^2\\
&=
\frac{1}{2}\cE_{\PMH}(g_w+g_{w'})\\
&\ge
\frac{1}{2} \epsMH \|g_w+g_{w'}\|^2_{L^2(\pi)}.
\end{align*}
Thus
\begin{equation}
  \label{eqn.DirichletBoundMain}
  \cE_{\Ppm}\ge
  \frac{1}{4}\epsMH\int \qtil(w)\qtil(w') \{w\wedge w'\} \|g_w+g_{w'}\|^2_{L^2(\pi)}\md w \md w'.
\end{equation}

\textbf{Stage 2}: Using the fact that $\int_\cW w' \qtil(w') \md w'=1$, the inner product is
\begin{align*}
  \langle h_*, g\rangle_{L^2(\pi\times \nu)}
  &=
  \int \pi(\theta)w\qtil(w)  h_*(\theta) g_w(\theta) \md \theta \md w\\
  &=
  \int w\qtil(w) \langle h_*, g_w\rangle_{L^2(\pi)} \md w\\
  &=
  \int w\qtil(w) w'\qtil(w')\langle h_*, g_w\rangle_{L^2(\pi)} \md w\md w'\\
  &=
  \frac{1}{2}
 \int w\qtil(w) w'\qtil(w')\langle h_*, g_w+g_{w'}\rangle_{L^2(\pi)} \md w\md w', 
\end{align*}
by relabelling $w\leftrightarrow w'$ and averaging.

The Cauchy-Schwarz inequality then provides the bound
\begin{equation}
  \label{eqn.innerProdBoundMain}
  \langle h_*, g\rangle_{L^2(\pi\times \nu)}
  \le
  \frac{1}{2}
  \int \qtil(w)\qtil(w') w w' \|h_*\|_{L^2(\pi)} \|g_w+g_{w'}\|_{L^2(\pi)}.
\end{equation}

\textbf{Stage 3}: Combining \eqref{eqn.DirichletBoundMain} and \eqref{eqn.innerProdBoundMain} through \eqref{eqn.varRepvar}, $\asVar_{\Ppm}(h_*) +\|h_*\|^2_{L^2(\pi)}$ is
\begin{align*}
&\le
\int\qtil(w)\qtil(w')
\left\{2 w w' \|h_*\|_{L^2(\pi)} \|g_w+g_{w'}\|_{L^2(\pi)}
-\frac{1}{2} \epsilon_{\mathrm{MH}}(w\wedge w')\|g_w+g_{w'}\|^2_{L^2(\pi)}\right\}\md w \md w'\\
&\le
\int\qtil(w)\qtil(w') \frac{4\|h_*\|^2_{L^2(\pi)}w^2{w'}^2}{2\epsilon_{\mathrm{MH}}(w\wedge w')} \md w \md w'\\
&=
\frac{2}{\epsilon_{\mathrm{MH}}}\|h_*\|^2_{L^2(\pi)}\int_{\cW^2}\qtil(w)\qtil(w') ww'(w \vee w') \md w \md w',
\end{align*}
since $w w' = (w\vee w')(w\wedge w')$. 
The second inequality follows as when $c> 0$, $ax-cx^2=-c[x-a/(2c)]^2+a^2/(4c) \le a^2/4c$. $\square$

\subsection{Comparison with other bounds in the literature}
We now consider the case where Assumptions \ref{ass.indepNoise} and \ref{ass.lognormalCLT} hold and derive a closed-form expression for the bound in Theorem \ref{theorem.varboundA} in this case. We then compare this bound with two other, equally general bounds, taken from \cite{ALPW2022} and \cite{DPDK2015}. 

\begin{prop}
  \label{prop.PMCLT}
  If Assumptions \ref{ass.indepNoise} and \ref{ass.lognormalCLT} hold. Then \eqref{eqn.aVarUB} becomes
  \[
R_{\mathrm{S}}(\sigma):=\iint_{\cW^2} \qtil(w)\qtil(w') w w' \{w\vee w'\} \md w \md w'=2\exp(\sigma^2)\Phi\left(\frac{\sigma}{\sqrt{2}}\right).
\]
\end{prop}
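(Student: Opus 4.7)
The plan is to reduce the double integral to a single Gaussian tail probability via symmetry and exponential tilting in the lognormal coordinate. First I would exploit the fact that $ww'(w\vee w')$ is symmetric in $(w,w')$ and equals $w{w'}^2$ on the half-plane $\{w' \ge w\}$, so that
\[
R_{\mathrm{S}}(\sigma) = 2 \iint_{w' \ge w} \qtil(w)\qtil(w')\, w\,{w'}^2\, \md w\, \md w' = 2\,\Expect{e^{V+2V'}\mathbb{1}\{V' \ge V\}},
\]
where $V = \log W$ and $V' = \log W'$ are, by Assumption \ref{ass.lognormalCLT}, independent $\mathsf{N}(-\sigma^2/2, \sigma^2)$ random variables.

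Next I would apply an exponential tilt to remove the exponential weight. Define a new probability measure $\widetilde{\mathbb{P}}$ under which $V$ and $V'$ are independent with $V \sim \mathsf{N}(\sigma^2/2,\sigma^2)$ and $V' \sim \mathsf{N}(3\sigma^2/2,\sigma^2)$; the Radon--Nikodym derivative is exactly $e^{V+2V'}$ divided by its expectation under the original law. That expectation is $\exp\!\bigl(-\tfrac{3}{2}\sigma^2 + \tfrac{1}{2}(1+4)\sigma^2\bigr) = \exp(\sigma^2)$. Hence
\[
\Expect{e^{V+2V'}\mathbb{1}\{V'\ge V\}} = \exp(\sigma^2)\, \widetilde{\mathbb{P}}(V' \ge V).
\]

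Finally I would compute the remaining probability directly. Under $\widetilde{\mathbb{P}}$, the difference $V' - V$ is Gaussian with mean $\sigma^2$ and variance $2\sigma^2$, so
\[
\widetilde{\mathbb{P}}(V' \ge V) = \widetilde{\mathbb{P}}\!\left(\frac{V'-V - \sigma^2}{\sigma\sqrt{2}} \ge -\frac{\sigma}{\sqrt{2}}\right) = \Phi\!\left(\frac{\sigma}{\sqrt{2}}\right).
\]
Combining the three displays yields $R_{\mathrm{S}}(\sigma) = 2\exp(\sigma^2)\Phi(\sigma/\sqrt{2})$, as claimed. There is no real obstacle: the only point requiring a little care is bookkeeping of the tilted means, ensuring that the change of measure exactly cancels the $e^{V+2V'}$ factor in the integrand while leaving a pure Gaussian tail probability.
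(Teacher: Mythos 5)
Your proof is correct, and it takes a genuinely different route from the paper's. You split the double integral by symmetry onto the half-plane $\{w'\ge w\}$, where $ww'(w\vee w')=w{w'}^2$, and then use an exponential change of measure: the tilt by $e^{V+2V'}$ has the correct normalisation $\exp(\sigma^2)$, the tilted laws $V\sim\mathsf{N}(\sigma^2/2,\sigma^2)$ and $V'\sim\mathsf{N}(3\sigma^2/2,\sigma^2)$ are right, and the resulting tail probability of $V'-V\sim\mathsf{N}(\sigma^2,2\sigma^2)$ gives $\Phi(\sigma/\sqrt{2})$, so the bookkeeping all checks out. The paper instead keeps the full plane, factors the integrand as $(WW')^{3/2}\{(W/W')\vee(W'/W)\}^{1/2}$, and exploits the fact that for i.i.d.\ lognormals the product $WW'$ and the ratio $W'/W$ are independent (a rotation in Gaussian coordinates), reducing the problem to the moment generating function of a half-normal, which it evaluates by completing the square. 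The two arguments are of comparable length and difficulty: the paper's buys a clean separation into two independent one-dimensional expectations, while your domain-splitting-plus-tilting argument is perhaps more mechanical and extends immediately to expectations of the form $\Expect{W^a{W'}^b\,\mathbb{1}\{W'\ge W\}}$, so either would serve as a proof of Proposition \ref{prop.PMCLT}.
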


Example 60 of \cite{ALPW2022} provides an upper bound on $\asVar_{\Ppm}$ derived from a weak Poincar\'e inequality. Translated into our notation, this becomes $\asVar_{\Ppm}+\|h_*\|^2_{L^2(\pi)}\le \|h_*\|^2_{\mathrm{osc}}\times  \frac{4}{C_p}R_{\mathrm{ALPW22}}(\sigma)$, where $C_P$ is the right-spectral gap of $\PMH^2$, and 
\[
R_{\mathrm{ALPW22}}(\sigma):=
2\sqrt{2 \pi}\exp(\sigma^2)\frac{1+\sigma^2}{\sigma} \Phi(\sigma)+2\exp(\sigma^2/2).
\]
Its form bears a striking resemblance to $R_{\mathrm{S}}(\sigma)$, and can be made directly comparable for kernels where the left spectral gap is at least as large as the right spectral gap and the right spectral gap, $\epsilon_{MH}$, is small: in this case $C_P\approx 2\epsilon_{MH}$. Since $\|f\|^2_{\mathrm{osc}}\ge \|f\|^2_{L^2(\pi)}$, we may compare $R_{\mathrm{S}}$ against a best-case equivalent from the bound $R_{\mathrm{ALPW22}}$.

We also examine the general bound in Corollary 1 of \cite{DPDK2015}. Let 
\begin{align*}
\alpha_1(w,w')&=1\wedge w'/w,~~~
\alphabar_1(w)=\int_{\cW}\qtil(w')\alpha_1(w,w')\md w'~~~
\mbox{and}~~~
\widetilde{\alpha}_1^{-1}=\int_{\cW}\qtil(w) w \frac{1}{\alphabar_1(w)} \md w.
\end{align*}
After some manipulation, the general bound in Corollary 1 of \cite{DPDK2015} can be simplified considerably to provide a ratio between $\|h_*\|^2_{L^2(\pi)}+\asVar_{\Ppm}(h_*)$ and $\|h_*\|^2_{L^2(\pi)}+\asVar_{\PMH}(h_*)$ of 
\[
R_{\mathrm{DPDK15}}(\sigma)=\frac{2}{\widetilde{\alpha}_1}-\int_{\cW}w \qtil(w)\qtil(w')\frac{\alpha_1(w,w')}{\alphabar_1(w)\alphabar_1(w')}\md w \md w'. 
\]
Subject to Assumption \ref{ass.lognormalCLT},
\[
\alphabar_1(w)=\frac{1}{w}\Phi\left(-\frac{1}{2}\sigma+\frac{1}{\sigma}\log w \right)+\Phi\left(-\frac{1}{2}\sigma - \frac{1}{\sigma}\log w\right).
\]
The harmonic mean  $\widetilde{\alpha}_1$ then requires only a single numerical integral for each value of $\sigma$, and the second quantity requires a double numerical integral for each $\sigma$.

Unlike $R_{\mathrm{S}}$ and $R_{\mathrm{ALPW22}}$, which bound the ratio of the worst case under $\Ppm$ to the worst case under $\PMH$, $R_{\mathrm{DPDK15}}$ bounds the ratio for each individual function. However, since $h_*$ does not depend on $w$, we imagine $R_{\mathrm{S}}$ and $R_{\mathrm{ALPW22}}$ might be representative bounds of the ratio for individual functions, and it is in this spirit that we compare the three bounds in the right panel of Figure \ref{fig.compareBounds}. The bound from \cite{ALPW2022} is always at least a factor of $6$ larger than $R_{\mathrm{S}}(\sigma)$, but $R_{\mathrm{S}}(\sigma)$ and $R_{\mathrm{DPDK15}}(\sigma)$ are almost indistinguishable. The left panel depicts the ratio $R_{\mathrm{DPDK15}}(\sigma)/R_{\mathrm{S}}(\sigma)$ and shows that, at least over the range of $\sigma$ values of interest, neither dominates the other and they are within $2\%$ of each other. $R_{\mathrm{S}}(\sigma)$, however, requires no numerical integration and was more straightforward to obtain.

\begin{figure}
\begin{center}
  \includegraphics[scale=0.4,angle=0]{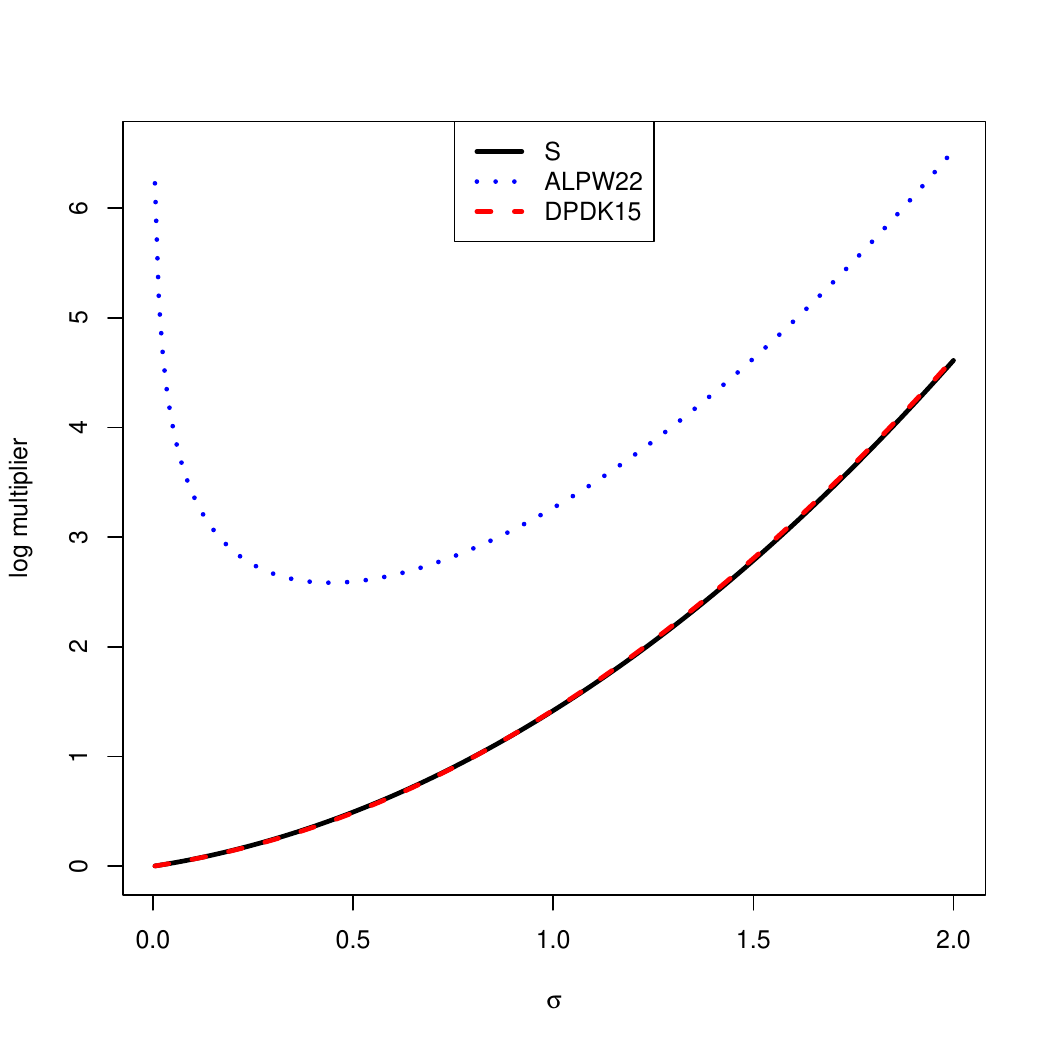}
  \includegraphics[scale=0.4,angle=0]{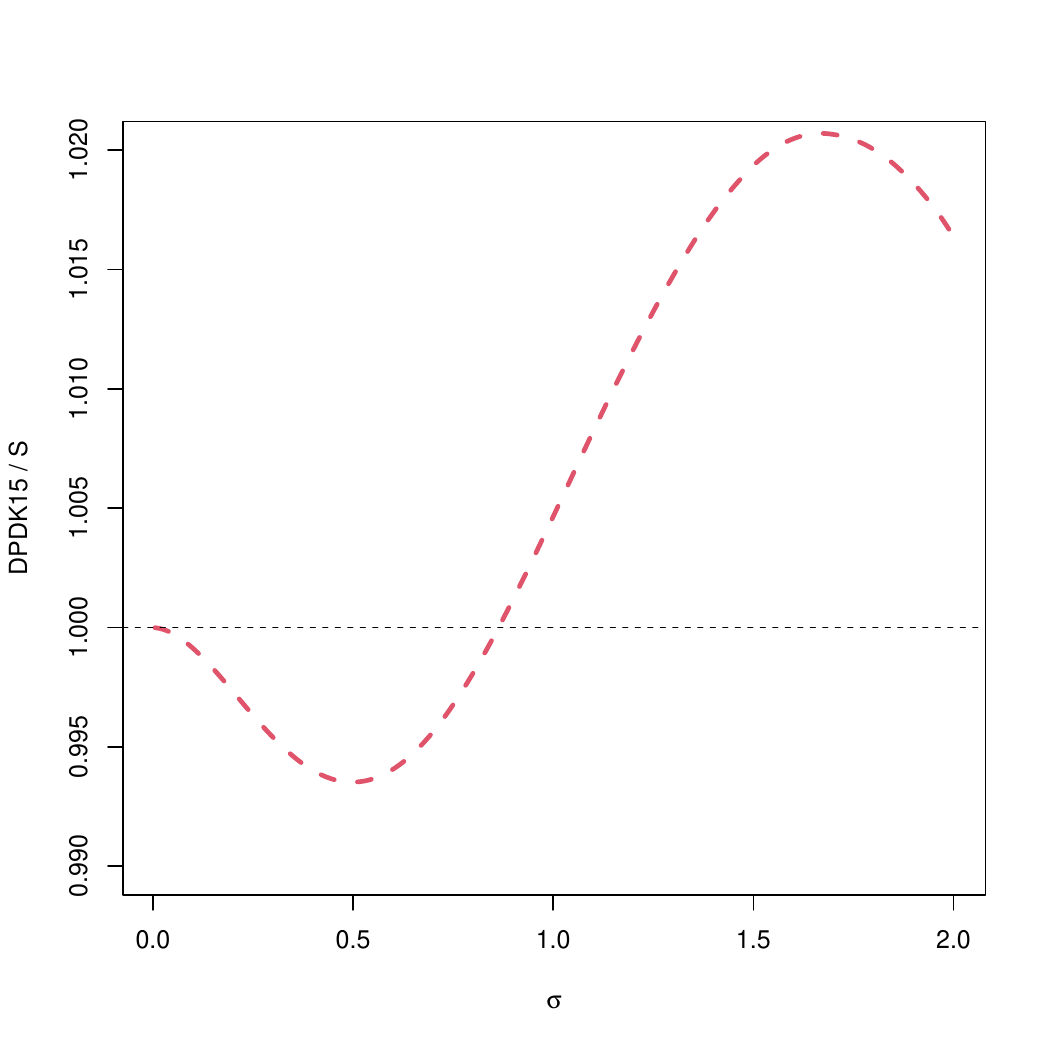}
  \caption{Left: logarithm of the bound on the multiplier of $\|h_*\|^2+\asVar_{\PMH}(h^*)$ against $\sigma$ for $R_S(\sigma)$ from Proposition \ref{prop.PMCLT} and for the bounds in \cite{ALPW2022} (ALPW22) and \cite{DPDK2015} (DPDK15). Right: ratio of the multiplier bound from \cite{DPDK2015} to that in Proposition \ref{prop.PMCLT}.
\label{fig.compareBounds}
}
\end{center}
\end{figure}

The quantity $\widetilde{\alpha}_1^{-1}$ can be rewritten as
\[
\widetilde{\alpha}_1^{-1}=\int_{\cW} \qtil(w) w^2 \frac{1}{\int_{\cW}\qtil(w') (w\wedge w') \md w'} \md w.
\]
Now that we know what to look for, we can see that $\widetilde{\alpha}_1^{-1}=\infty$ if $\Expects{\qtil}{W^2}=\infty$ since $\lim_{w\to \infty}\int_{\cW} \qtil(w')(w\wedge w')\md w'=1$.

\cite{DPDK2015} contains a lower bound for a general kernel: $\asVar_{\Ppm}(h) \ge \asVar_{\PMH}(h)/\Expects{\nu}{\alphabar_1(W)}$. Unfortunately, this is finite even when $\Expects{\qtil}{W^2}=\infty$. The article also provides tighter upper and lower bounds in the special case that $\PMH$ is positive, which we do not explore here since our interest is in bounds for general Metropolis--Hastings kernels.

\subsection{General $h$}

In the case that interest is in a general $h(\theta,w)\in L^2(\pi\times \nu)$, we present the following explicit upper bound, proved in Appendix \ref{app.proveBoundB}:

\begin{theorem}
\label{theorem.varboundB}
Suppose that $\PMH$ has a right-spectral gap of $\epsMH$ and Assumption \ref{ass.indepNoise} holds. For any $h\in L^2(\pi\times \nu)$, write $h_w(\theta)\equiv h(\theta,w)$ as the function of $\theta$, indexed by $w$. Then $h_w(\theta)\in L^2(\pi)$ $\nu$-almost surely, and 
  \begin{align*}
  \asVar_h(\Ppm) &\le
  \frac{2}{\epsMH}\iint_{\cW\times \cW} \qtil(w)\qtil(w')ww' \{w\vee w'\}\left\{\|h_{w}^2\|_{L^2(\pi)}+\|h_{w'}\|^2_{L^2(\pi)}\right\}\md w \md w' -\|h\|^2_{L^2(\pi\times \nu)}\\
  &=
  \frac{4}{\epsMH}\iint_{\cW\times \cW} \qtil(w)\qtil(w')ww' \{w\vee w'\}\|h_{w}^2\|_{L^2(\pi)}\md w \md w' -\|h\|^2_{L^2(\pi\times \nu)}.  
 \end{align*}
 \end{theorem}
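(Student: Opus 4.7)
The plan is to follow the three-stage template of Section~\ref{sec.proveMainTheorem}. Stage~1 transfers verbatim: the Dirichlet form bound
\[
\cE_{\Ppm}(g)\;\ge\;\tfrac{1}{4}\epsMH\iint_{\cW\times\cW}\qtil(w)\qtil(w')(w\wedge w')\|g_w+g_{w'}\|^2_{L^2(\pi)}\md w\md w'
\]
from \eqref{eqn.DirichletBoundMain} involves only $g$ and is indifferent to whether $h$ depends on $w$.

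Stage~2 requires genuine modification because, unlike $h_\ast(\theta)$, the general integrand $h(\theta,w)$ cannot be pulled outside the $w$-integration. Inserting $1=\int_{\cW} w'\qtil(w')\md w'$ and symmetrising under $w\leftrightarrow w'$ gives
\[
\langle h,g\rangle_{L^2(\pi\times\nu)} \;=\; \tfrac{1}{2}\iint w\qtil(w)w'\qtil(w')\bigl\{\langle h_w,g_w\rangle_{L^2(\pi)}+\langle h_{w'},g_{w'}\rangle_{L^2(\pi)}\bigr\}\md w\md w'.
\]
The target is a bound of the form $\langle h,g\rangle \le \tfrac{1}{2}\iint w\qtil(w)w'\qtil(w')\sqrt{\|h_w\|^2+\|h_{w'}\|^2}\,\|g_w+g_{w'}\|\,\md w\md w'$, which has the same $(w,w')$-structure as the Stage~2 bound of Theorem~\ref{theorem.varboundA} but with $\|h_\ast\|_{L^2(\pi)}$ replaced by $\sqrt{\|h_w\|^2_{L^2(\pi)}+\|h_{w'}\|^2_{L^2(\pi)}}$. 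The natural route is the polarisation identity $\langle h_w,g_w\rangle+\langle h_{w'},g_{w'}\rangle = \tfrac{1}{2}\langle h_w+h_{w'},g_w+g_{w'}\rangle + \tfrac{1}{2}\langle h_w-h_{w'},g_w-g_{w'}\rangle$: the sum piece combines cleanly with Stage~1 via Cauchy--Schwarz together with the parallelogram bound $\|h_w+h_{w'}\|^2\le 2(\|h_w\|^2+\|h_{w'}\|^2)$, while the difference piece is controlled using the exact form underlying Lemma~\ref{lem.rearrange.square},
\[
D(w,w') \;=\; \tfrac{1}{2}\cE_{\PMH}(g_w+g_{w'}) \;+\; \tfrac{1}{4}\iint z(\theta,\theta')\bigl\{(g_w-g_{w'})(\theta)+(g_w-g_{w'})(\theta')\bigr\}^2\md\theta\md\theta',
\]
the cross term vanishing by the symmetry of $z(\theta,\theta')=\pi(\theta)q(\theta'|\theta)\alpha_{MH}(\theta,\theta')$. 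The main technical obstacle is this handling of the $\|g_w-g_{w'}\|$ contribution, which the right spectral gap of $\PMH$ does not directly bound.

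Stage~3 is then routine: feeding the Stage~1 and Stage~2 bounds into the variational formula \eqref{eqn.varRepvar} and applying $ax-cx^2\le a^2/(4c)$ pointwise in $(w,w')$ with $x=\|g_w+g_{w'}\|$, $c=\epsMH(w\wedge w')/2$ and $a=2ww'\sqrt{\|h_w\|^2+\|h_{w'}\|^2}$ yields $a^2/(4c) = 2ww'(w\vee w')\{\|h_w\|^2+\|h_{w'}\|^2\}/\epsMH$, which integrates to exactly the first form of the stated bound. The second form follows from the $w\leftrightarrow w'$ symmetry of the integrand. The factor-of-two degradation relative to Theorem~\ref{theorem.varboundA} in the special case $h(\theta,w)=h_\ast(\theta)$ is attributable entirely to the parallelogram step in Stage~2.
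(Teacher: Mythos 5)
Your Stage~1, the symmetrised identity opening Stage~2, and the Stage~3 algebra all match what is needed, but the route you propose for the heart of Stage~2 has a genuine gap, and it sits exactly where you flag ``the main technical obstacle''. The intermediate inequality you aim for,
\[
\langle h,g\rangle_{L^2(\pi\times\nu)}\;\le\;\tfrac12\iint_{\cW\times\cW} \qtil(w)\qtil(w')\,ww'\,\bigl(\|h_w\|^2_{L^2(\pi)}+\|h_{w'}\|^2_{L^2(\pi)}\bigr)^{1/2}\,\|g_w+g_{w'}\|_{L^2(\pi)}\,\md w\,\md w',
\]
is false in general: take $\cW=\{w_1,w_2\}$ with $w_1=1/2$, $\qtil(w_1)=2/3$, $w_2=2$, $\qtil(w_2)=1/3$ (so $\nu(w_1)=1/3$, $\nu(w_2)=2/3$), and $h_{w_1}=g_{w_1}=\phi$, $h_{w_2}=g_{w_2}=-\phi$ with $\phi\in L_0^2(\pi)$, $\|\phi\|_{L^2(\pi)}=1$. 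The left-hand side is $1$, while on the right the off-diagonal terms vanish because $g_{w_1}+g_{w_2}=0$, leaving $5\sqrt{2}/9\approx 0.79$. The point is that the supremum in \eqref{eqn.varRepvar} includes functions $g$ whose $w$-slices nearly cancel in the sum while remaining individually large, so for $h$ depending on $w$ you cannot funnel the inner product through $g_w+g_{w'}$ alone. Nor does the exact refinement of Lemma~\ref{lem.rearrange.square} that you write down rescue the difference piece: the extra term $\tfrac14\iint z(\theta,\theta')\{(g_w-g_{w'})(\theta)+(g_w-g_{w'})(\theta')\}^2\md\theta\,\md\theta'$ is a sum-over-arguments quantity weighted by acceptance probabilities, and under the theorem's sole hypothesis of a \emph{right} spectral gap for $\PMH$ it is not bounded below by any positive multiple of $\|g_w-g_{w'}\|^2_{L^2(\pi)}$ (it vanishes, for instance, whenever accepted moves flip the sign of $g_w-g_{w'}$, which is compatible with an arbitrarily large right gap); moreover it enters the Dirichlet bound with weight $w\wedge w'$ whereas the difference piece of the inner product carries weight $ww'$.

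The ingredient the paper uses, and which your plan is missing, is Proposition~\ref{prop.usePD}: writing $w\wedge w'=\int_0^\infty 1(w\le z)1(w'\le z)\md z$ shows that the kernel $\qtil(w)\qtil(w')(w\wedge w')$ is positive semi-definite, hence $\iint\qtil(w)\qtil(w')(w\wedge w')\langle g_w,g_{w'}\rangle_{L^2(\pi)}\md w\,\md w'\ge 0$. Expanding $\|g_w+g_{w'}\|^2$ and discarding this nonnegative cross term upgrades \eqref{eqn.DirichletBoundMain} to $\cE_{\Ppm}(g)\ge\tfrac{\epsMH}{4}\iint\qtil(w)\qtil(w')(w\wedge w')\{\|g_w\|^2_{L^2(\pi)}+\|g_{w'}\|^2_{L^2(\pi)}\}\md w\,\md w'$, i.e.\ a lower bound featuring the slices of $g$ \emph{separately}. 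Stage~2 is then the termwise Cauchy--Schwarz bound $\langle h,g\rangle\le\tfrac12\iint\qtil(w)\qtil(w')ww'\{\|h_w\|\|g_w\|+\|h_{w'}\|\|g_{w'}\|\}\md w\,\md w'$, and Stage~3 completes the square separately in $\|g_w\|$ and in $\|g_{w'}\|$; no polarisation or parallelogram step occurs, so your attribution of the factor of two to such a step is moot. If you replace your polarisation argument by Proposition~\ref{prop.usePD} and this decoupled Stage~2, your Stage~3 computation goes through essentially verbatim and yields the stated bound.
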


Proposition 4 of \cite{DelLee2018} provides the following sufficient condition for $\asVar_{\Ppm}(h)<\infty$:
\[
\int_{\cW}(w+\Expects{\qtil}{W^2})w \left\{\int_{\cX} \frac{h(\theta,w)^2}{\alphabar_{\mathrm{MH}}(\theta)}\pi(\theta) \md \theta\right\}\qtil(w)\md w <\infty,
\]
where $\alphabar_{\mathrm{MH}}(\theta)=\int_{\cX}q(\theta'|\theta)\alpha_{\mathrm{MH}}(\theta,\theta')\md \theta'$. If $\PMH$ is geometrically ergodic then \cite[e.g.][Theorem 5.1]{RobTwe1996} $\alphabar_{\mathrm{MH}}>\delta>0$ and this condition becomes equivalent to the sufficient condition implied by the explicit bound in Theorem \ref{theorem.varboundB} because $(w+w')/2\le w\vee w' \le w+w'$.

\subsection{Correlated pseudo-marginal Metropolis--Hastings}
\label{sec.corrPMMH}
The correlated pseudo-marginal method \cite[]{DDP2018,DLKS2015} makes $W'$ positively correlated with $W$ rather than independent of it. This reduces the variance of the ratio $W'/W$, which, in turn, increases the acceptance rate if the number of particles is kept fixed or allows a reduced number of particles whilst maintaining the same acceptance rate. The noise $W'$ is proposed from a density $\qtil(w'|w)$, such that
\[
\qtil(w,w'):=\qtil(w)\qtil(w'|w)
=
\qtil(w',w);
\]
that is, if $W\sim \Qtil$ then $W$ and $W'$ come from an exchangeable distribution.

To control the variance of the PMMH acceptance probability when likelihood estimates are obtained through a particle filter, the number of particles must increase in proportion to the number of observation times. \cite{DDP2018} shows that for correlated PMMH it is sufficient that the number of particles increases sublinearly. The asymptotic variance of an approximate kernel motivated through Assumption \ref{ass.lognormalCLT} and an averaging behaviour, and which might be expected to `capture come of the quantitative properties of correlated pseudo-marginal kernel' is then upper bounded through a handicapped version of the kernel, leading to tuning advice. We show that converting from PMMH to the correlated pseudo-marginal method can fundamentally change the nature of the convergence so that infinite asymptotic variances become finite.

Given the positive correlation between $W$ and $W'$, it is not unreasonable to assume that any particular moment of $W'$ increases with $W$, motivating the following assumption:

\begin{assumption}
  \label{eqn.assump.corrPM}
There is a function $b:\cW\to \mathbb{R}^+$ and $c_{b}>0$ such that
\begin{equation}
  \label{eqn.corrPMcond}
\Expects{\qtil}{{W'}^{1/2}b(W')|W=w}\ge c_{b} \frac{w^{1/2}}{b(w)}~~~\mbox{for}~\nu~\mbox{almost all}~w\in \cW.
\end{equation}
\end{assumption}

The pseudo-marginal algorithm satisfies Assumption \ref{eqn.assump.corrPM} with $b(w)=w^{1/2}$ and $c_b=1$. For a correlated PMMH algorithm with a positive correlation, we expect \eqref{eqn.corrPMcond} to hold for some $b(w)$ that increases more slowly than $w^{1/2}$. Indeed if $(\log W,\log W')$ satisfy a joint central limit theorem then $b(w)$ relates directly to their correlation (see Appendix \ref{sec.prove.corrPMCLT} for the proof):

\begin{prop}
  \label{prop.corrPM.CLT}
If under $q(w,w')$,
  \[
  \begin{bmatrix}
    \log W\\
    \log W'
  \end{bmatrix}
  \sim
  \mathsf{N}\left(
  \begin{bmatrix}
    -\frac{1}{2}\sigma^2\\
    -\frac{1}{2}\sigma^2
  \end{bmatrix}
  ,
  \begin{bmatrix}
    \sigma^2,\rho\sigma^2\\
    \rho\sigma^2,\sigma^2
  \end{bmatrix}  
  \right).
  \]
then Assumption \ref{eqn.assump.corrPM} holds with $b(w)=w^{(1-\rho)/(2+2\rho)}$ and $c_b=1$.  
\end{prop}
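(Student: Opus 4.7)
The plan is to reduce Assumption \ref{eqn.assump.corrPM} to a single moment-generating-function computation for a conditional log-normal, after which the claim reduces to algebra. With the proposed $b(w)=w^{(1-\rho)/(2+2\rho)}$, the exponents collapse: since $\tfrac12+\tfrac{1-\rho}{2+2\rho}=\tfrac{1}{1+\rho}$ and $\tfrac12-\tfrac{1-\rho}{2+2\rho}=\tfrac{\rho}{1+\rho}$, the inequality \eqref{eqn.corrPMcond} with $c_b=1$ becomes precisely
\[
\Expects{\qtil}{{W'}^{1/(1+\rho)}\mid W=w}\;\ge\;w^{\rho/(1+\rho)}.
\]
So the first step is to rewrite the required inequality in this reduced form.

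Next, I would use the standard bivariate-normal conditioning formula: given $\log W=v$, the conditional law of $\log W'$ is $\mathsf{N}\!\left(-\tfrac12\sigma^2+\rho(v+\tfrac12\sigma^2),\;\sigma^2(1-\rho^2)\right)$. Applying the normal moment-generating function $\Expect{e^{tX}}=\exp(t\mu+\tfrac12 t^2\tau^2)$ at $t=1/(1+\rho)$, $\mu=-\tfrac12\sigma^2+\rho(v+\tfrac12\sigma^2)$, and $\tau^2=\sigma^2(1-\rho^2)$ gives a closed-form expression for the conditional expectation.

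The main (and only) real step is then to simplify the exponent and check the cancellation of the $\sigma^2$ terms. The contribution $t\mu$ supplies $\tfrac{\sigma^2(\rho-1)}{2(1+\rho)}+\tfrac{\rho v}{1+\rho}$, while $\tfrac12 t^2\tau^2=\tfrac{\sigma^2(1-\rho)(1+\rho)}{2(1+\rho)^2}=\tfrac{\sigma^2(1-\rho)}{2(1+\rho)}$; these two $\sigma^2$ terms cancel exactly, leaving just $\rho v/(1+\rho)$. Exponentiating yields $w^{\rho/(1+\rho)}$, so the inequality holds with equality and $c_b=1$ is valid. Since the only step with any content is this algebraic cancellation, there is no real obstacle beyond carefully tracking the constants; the result is somewhat striking because the power $1/(1+\rho)$ is precisely tuned to kill the $\sigma^2$ dependence, which is the reason $b$ takes the stated form.
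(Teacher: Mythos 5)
Your proof is correct and follows essentially the same route as the paper: condition $\log W'$ on $\log W=w$ using the bivariate-normal formula, evaluate the lognormal moment at exponent $1/(1+\rho)$ via the Gaussian moment generating function, and observe that the $\sigma^2$ terms cancel exactly so that \eqref{eqn.corrPMcond} holds with equality and $c_b=1$. The only cosmetic difference is that the paper parametrises $b(w)=w^{1/2-\gamma}$ and solves for $\gamma=\rho/(1+\rho)$, whereas you verify the stated $b$ directly; the computation is the same.
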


Our main result for correlated pseudo-marginal Metropolis--Hastings is:
\begin{theorem}
\label{theorem.varboundCorrPM}
If Assumption \ref{eqn.assump.corrPM} holds for the correlated pseudo-marginal kernel $\Pcpm$ and if $\PMH$ has a right-spectral gap of $\epsMH$ then for any $h_*\in L^2(\pi)$, 
\[
\asVar_{\Pcpm}(h_*) \le
\left[  \frac{2 c_\gamma^{-2}}{\epsMH}\iint_{\cW\times \cW} q(w,w') \{w\vee w'\}b(w)^2b(w')^2\md w \md w' -1\right]
\|h_*\|_{L^2(\pi)}.
\]
\end{theorem}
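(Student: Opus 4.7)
The plan is to mirror the three-stage structure of the proof of Theorem \ref{theorem.varboundA}, with Assumption \ref{eqn.assump.corrPM} playing the role played there by the unbiasedness identity $\int_\cW w'\qtil(w')\md w'=1$. Throughout I read the $c_\gamma^{-2}$ in the statement as $c_b^{-2}$ and the trailing $\|h_*\|_{L^2(\pi)}$ as $\|h_*\|^2_{L^2(\pi)}$, in keeping with the form of Theorem \ref{theorem.varboundA}.

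\textbf{Stage 1 (Dirichlet form lower bound).} The acceptance probability of $\Pcpm$ equals $1\wedge[r(\theta,\theta')w'/w]$ and so dominates the handicapped rate $\{1\wedge w'/w\}\alpha_{MH}(\theta,\theta')$. Using the identity $\nu(w)\qtil(w'|w)\{1\wedge w'/w\} = q(w,w')(w\wedge w')$, the Stage~1 manipulations of Section \ref{sec.proveMainTheorem} transfer verbatim, and Lemma \ref{lem.rearrange.square} together with the right spectral gap of $\PMH$ produces
\[
\cE_{\Pcpm}(g) \;\ge\; \frac{\epsMH}{4}\iint_{\cW\times\cW} q(w,w')(w\wedge w')\,\|g_w+g_{w'}\|^2_{L^2(\pi)}\,\md w\,\md w',
\]
which is the correlated analogue of \eqref{eqn.DirichletBoundMain}, with $\qtil(w)\qtil(w')$ replaced by $q(w,w')$.

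\textbf{Stage 2 (inner-product upper bound).} Multiplying Assumption \ref{eqn.assump.corrPM} through by $\qtil(w) w^{1/2} b(w)\ge 0$ yields the pointwise estimate
\[
c_b\,w\qtil(w)\;\le\;\int_{\cW} q(w,w')\sqrt{ww'}\,b(w)b(w')\,\md w'\qquad\text{for }\nu\text{-a.e.\ }w,
\]
which is the correlated surrogate for unbiasedness. I would then symmetrize $\langle h_*,g\rangle = \int \nu(w)\langle h_*,g_w\rangle\md w = \tfrac12\iint q(w,w')\langle h_*,wg_w+w'g_{w'}\rangle\md w\md w'$ (using the exchangeability of $(W,W')$ under $q$), apply the displayed pointwise estimate inside the integrand, and invoke Cauchy--Schwarz on $\theta$, with the aim of obtaining
\[
|\langle h_*,g\rangle| \;\le\; \frac{c_b^{-1}}{2}\,\|h_*\|_{L^2(\pi)}\iint q(w,w')\sqrt{ww'}\,b(w)b(w')\,\|g_w+g_{w'}\|_{L^2(\pi)}\md w\md w'.
\]
The hard part is producing the symmetric combination $\|g_w+g_{w'}\|$ demanded by Stage~1: a naive triangle-inequality pass from $\|wg_w+w'g_{w'}\|$ delivers $\|g_w\|+\|g_{w'}\|$ instead, which is strictly larger and prevents a clean optimisation in Stage~3. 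I would resolve this using the decomposition $wg_w+w'g_{w'} = \tfrac12(w+w')(g_w+g_{w'}) + \tfrac12(w-w')(g_w-g_{w'})$ and controlling the antisymmetric piece separately, exploiting the $w\leftrightarrow w'$ symmetry of $q(w,w')\sqrt{ww'}b(w)b(w')$ to cancel the cross term.

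\textbf{Stage 3 (combine).} Substituting the Stage~1 and Stage~2 bounds into the variational representation \eqref{eqn.varRepvar} gives, pointwise in $(w,w')$, a quadratic $ax - cx^2$ in $x = \|g_w+g_{w'}\|_{L^2(\pi)}$ with $a = 2c_b^{-1}\sqrt{ww'}\,b(w)b(w')\,\|h_*\|_{L^2(\pi)}$ and $c = \tfrac12\epsMH(w\wedge w')$. The elementary inequality $ax - cx^2 \le a^2/(4c)$ together with $ww'/(w\wedge w') = w\vee w'$ then yields
\[
\asVar_{\Pcpm}(h_*)+\|h_*\|^2_{L^2(\pi)}\;\le\;\frac{2c_b^{-2}}{\epsMH}\|h_*\|^2_{L^2(\pi)}\iint q(w,w')(w\vee w')b(w)^2 b(w')^2\,\md w\,\md w',
\]
which is the claimed bound after subtracting $\|h_*\|^2_{L^2(\pi)}$.
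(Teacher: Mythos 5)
Your Stages 1 and 3 match the paper's proof (including your reading of $c_\gamma$ as $c_b$ and of the trailing factor as $\|h_*\|^2_{L^2(\pi)}$), and you have correctly identified the key object $p_*(w,w'):=q(w,w')\sqrt{ww'}\,b(w)b(w')$ together with the integrated inequality $c_b\,w\qtil(w)\le\int_\cW p_*(w,w')\,\md w'$ implied by Assumption \ref{eqn.assump.corrPM}. The genuine gap is in Stage 2, and it comes from the order of operations. Assumption \ref{eqn.assump.corrPM} is a conditional-expectation (i.e.\ $w'$-integrated) statement, so it can only be invoked while the $g$-dependence of the integrand is through $w$ alone. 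If you symmetrise first, as you propose, the integrand becomes $\langle h_*, w g_w + w' g_{w'}\rangle_{L^2(\pi)}$, which already involves $g_{w'}$, and the assumption can no longer be pulled through. Your proposed rescue also fails: writing $w g_w + w' g_{w'}=\tfrac12(w+w')(g_w+g_{w'})+\tfrac12(w-w')(g_w-g_{w'})$, the second term does \emph{not} cancel upon integration against the exchangeable density, because under $w\leftrightarrow w'$ both factors $(w-w')$ and $\langle h_*,g_w-g_{w'}\rangle$ change sign, so their product is invariant rather than odd. Even if it survived in some form, Stage 1 gives no control of $\|g_w-g_{w'}\|_{L^2(\pi)}$, so there is no handle for "controlling the antisymmetric piece separately"; and the symmetric piece carries the weight $(w+w')/2$, which is not related to $\sqrt{ww'}\,b(w)b(w')$ by any pointwise inequality --- the assumption is simply not a pointwise statement.

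The paper avoids all of this by applying the assumption \emph{before} symmetrising: since $\langle h_*,g_w\rangle_{L^2(\pi)}$ does not depend on $w'$,
\[
\langle h_*,g\rangle_{L^2(\pi\times\nu)}=\int_\cW w\qtil(w)\,\langle h_*,g_w\rangle_{L^2(\pi)}\,\md w
\le c_b^{-1}\iint_{\cW\times\cW} p_*(w,w')\,\langle h_*,g_w\rangle_{L^2(\pi)}\,\md w\,\md w',
\]
and only then uses the symmetry of $p_*$ to relabel $w\leftrightarrow w'$ and average, giving $\langle h_*,g\rangle\le \tfrac12 c_b^{-1}\iint p_*(w,w')\langle h_*,g_w+g_{w'}\rangle_{L^2(\pi)}\,\md w\,\md w'$; Cauchy--Schwarz then delivers exactly your Stage 2 target, and your Stage 3 goes through verbatim. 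So the architecture of your proposal is the right one, but the step you flag as "the hard part" is precisely where the argument breaks, and the repair is to reverse the order of the two operations rather than to decompose $wg_w+w'g_{w'}$.
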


As pointed out in Section \ref{sec.TheBounds}, $w\vee w'$ is bounded between $(w+w')/2$ and $w+w'$; so an equivalent sufficient condition for finite asymptotic variance is $\Expects{\qtil}{Wb(W)^2}<\infty$. In the case of Proposition \ref{prop.corrPM.CLT}, this amounts to requiring $\Expects{\qtil}{W^{2/(1+\rho)}}<\infty$; \emph{i.e.}, if $\rho>0$  a finite second moment for $W$ is no longer necessary for a finite asymptotic variance.

\section{Tuning particle Metropolis--Hastings}

As mentioned Section \ref{sec.intro}, the particle Metropolis--Hastings algorithm obtains the realisation of a non-negative unbiased estimator of the likelihood from a particle filter. Assumptions \ref{ass.indepNoise} and \ref{ass.lognormalCLT} are justified for large $T$ and large numbers of particles by \cite{SDDP2021} and \cite{BDD2014}, respectively, and in this case both both \cite{STRR2015} and \cite{DPDK2015} recommend choosing a number of particles so that $\Var{\log \pihat(\theta)} \approx 1$ for each $\theta$. Since, in practice this variance does depend on $\theta$, both articles recommend picking a value $\thetahat$ that is representative of the main posterior mass and choosing $n$ so that $\Var{\log \pihat(\thetahat)}\approx 1$. In fact, the optimisations suggest choosing $\Var{\pihat(\theta)}$ between $0.9$ and $3.3$; however, again because $\Var{\log \pihat(\theta)}$ is often larger in the tails of the posterior and because the mixing penalty associated with a larger variance can be severe, the  practical recommendation is to err on the side of caution.

Under Assumptions \ref{ass.indepNoise} and \ref{ass.lognormalCLT}, the efficiency measure from Theorem \ref{theorem.varboundA} is proportional to $\sigma^{-2}\{2R_{\mathrm{S}}(\sigma)-\epsilon_{\mathrm{MH}}\}$. Figure \ref{fig.lognormal.CLT} plots this function for various $\epsilon_{\mathrm{MH}}$. Empirically, we find this efficiency measure is minimised at $\sigma_{opt}\approx (0.83,0.88,0.91,0.92,0.93)$ respectively when $\epsilon_{\mathrm{MH}}= (1,0.5,0.2,0.05,0)$. Given the closeness of $R_{\mathrm{DPDK15}}$ to $R_{\mathrm{S}}$, the corresponding efficiency measure from \cite{DPDK2015} is minimised at almost identical values.

\begin{figure}
\begin{center}
  \includegraphics[scale=0.6,angle=0]{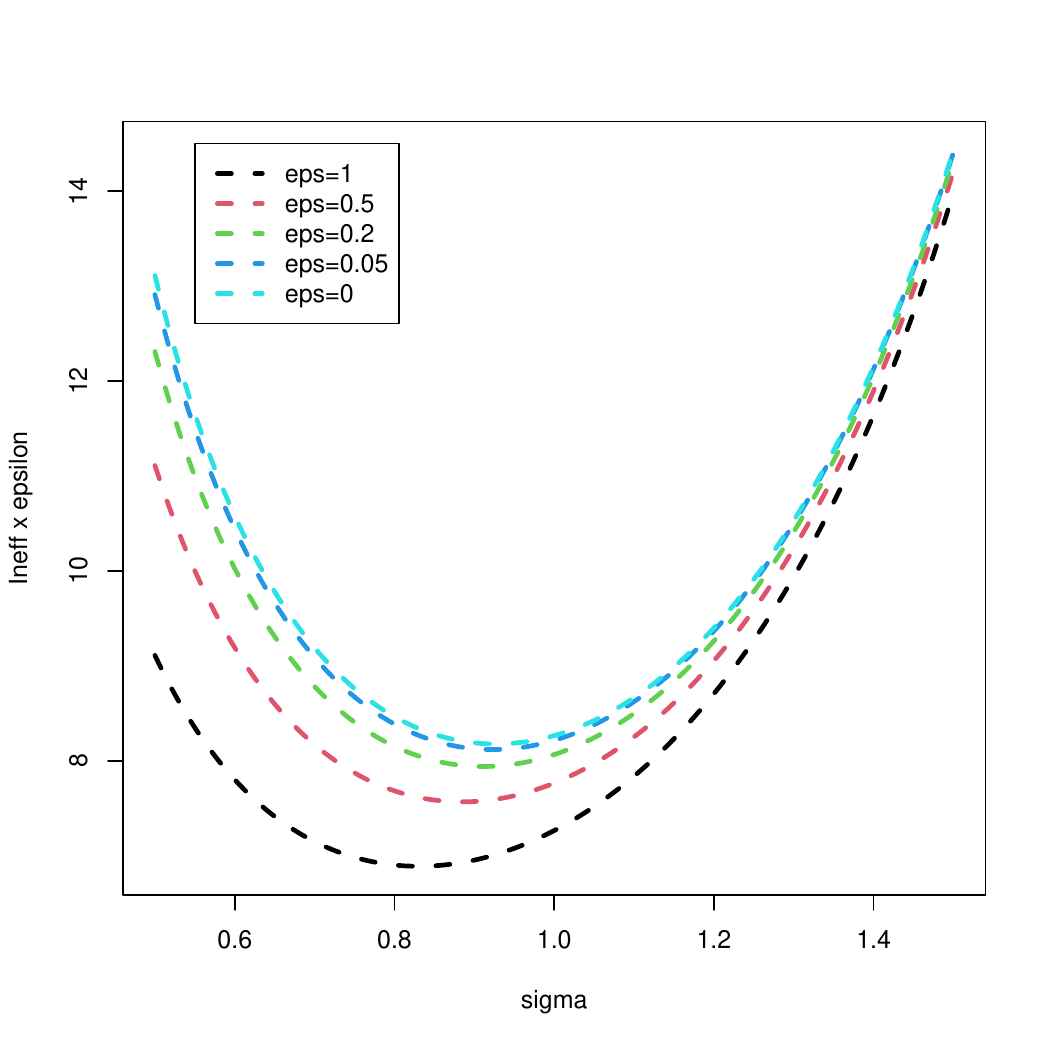}
  \caption{Relative inefficiency against $\sigma$ for the lognormal central limit theorem regime plotted for various values of $\epsilon_{\mathrm{MH}}$, the right spectral gap of the Metropolis-Hastings chain.
\label{fig.lognormal.CLT}
}
\end{center}
\end{figure}

Now, $\Var{\log \pihat(\theta;U)}\equiv \Var{\log W}$. The results and discussion in Section \ref{sec.VarBounds}, however, all point to $\Expect{W^2}$ being the critical quantity. In particular, $\Var{\log W}$ can be finite and estimators of it based on a sample can be well behaved, even when $\Expect{W^2}=\infty$. For example, consider the density in \eqref{eqn.heavy}, which we will return to for a numerical illustration in Section \ref{sec.num.heavy}. If $a\le 2$ then the second moment is infinite; however all polynomial moments of $\log W$ are finite. As part of the tuning procedure, we would try to estimate $\Var{\log W}$, and we would find a consistent estimator for this. There would be no warning that the asymptotic variance of any function $h_*\in L^2(\pi)$ was infinite. 

Since $\Expect{W}=1$, from the delta method, $\Vars{\qtil}{\log W}\approx \Vars{\qtil}{W}$, provided the mass for $W$ is concentrated around its expectation; \emph{i.e.}, provided $W$ is well behaved. This suggests tuning the particle filter according to $\Vars{\qtil}{W}$. If this can be estimated consistently then it suggests that the particle filter is sufficiently well behaved that asymptotic variances of quantities of interest will be finite. On the other hand, if it is difficult to esimate $\Vars{\qtil}{W}$ consistently, this suggests not running the PMMH algorithm at all and, instead, trying to find a better-behaved particle filter. For example, motivated by the results in Section \ref{sec.corrPMMH}, it might be possible to implement a correlated version of the PMMH algorithm; alternatively a more efficient proposal scheme might for the transitions could be employed \cite[e.g.][]{WGBS2017}.

Under Assumption \ref{ass.lognormalCLT}, when $\Var{\log W}=\sigma^2=0.9$, $\Var{W}=\exp(\sigma^2)-1\approx 1.5$. We, therefore, suggest the following:
\vspace{-.3cm}
\begin{itemize}
\item Choose the number of particles so that $\Var{W}\approx 1.5$.
\end{itemize}
\vspace{-.3cm}
In practice, we estimate $\Var{W}=\Var{\pihat(\theta;U)/\pi(\theta)}$ by obtaining $M$ estimates $\pihat(\thetahat;u^1)$, $\dots$, $\pihat(\thetahat;u^M)$ of $\pi(\thetahat)$ through repeated runs of the particle filter and then evaluating the quotient of the sample variance and the square of the sample mean. 

The moments of $W$ are also strongly linked with the polynomial convergence rate of the PMMH algorithm. For example Corollary 45 of \cite{ALPW2022} shows that if $\PMH$ is geometrically ergodic and $\Expects{\pi}{\Expects{\qtil_{\theta}}{W^k}}<\infty$ then $\Ppm$ converges to equilibrium at a rate of at least $n^{-k}$. Hence, if it is straightforward to obtain a reliable estimate of $\Var{W}$, then we might expect a better rate of convergence.

Tuning according to $\Var{W}$ also bypasses the logical contradiction exemplified in the example and generalisations of Section \ref{sec.exactObs}, where $\Var{\log W}$ is not even defined.

\section{Numerical illustrations}
\subsection{Checks on the bound}
\label{sec.simple.tests}
We now verify the bound from Theorem \ref{theorem.varboundA} empirically for examples specially constructed so that the true Metropolis--Hastings algorithm is implementable.  In both cases we measure efficiency in terms of the expected sample size, $\ESS:=n_{its}/\asVar_{\mathsf{P}}(h)$ with $h(\theta)=\theta$. In the first example both Assumptions \ref{ass.indepNoise} and \ref{ass.lognormalCLT} hold, whereas in the second example, neither assumption holds. In both examples 
we set $\theta=(\theta_1,\theta_2,\theta_3)$, $\pi(\theta)\propto \exp(-\|\theta\|^2/2)$ and use a proposal $\theta'\sim \mathsf{N}(\theta,\lambda^2 I_3/3)$ with $\lambda=1.4$ and an initial value of $\theta=(0,0,0)^\top$. Even in this simple example, $\cE_{\PMH}\approx 0.1$.

We start with a test of the case when the lognormal CLT holds precisely:
\[
\pihat(\theta)\propto \pi(\theta)\exp\left(-\frac{1}{2}\sigma^2 +\sigma Z\right)
\]
for various values of $\sigma$ with $Z\sim \mathsf{N}(0,1)$ independent across draws. We perform $\PMH$ on $\pi$ and then, for each $\sigma$ we perform both $\Ppm$ and $\Ptilpm$ on $(\pihat,Z)$; all algorithms are run for $n_{its}=2\times 10^5$ iterations.

When, as is usually the case, $\epsMH<<1$, $R_S^{\qtil}$ gives an approximate bound on the ratio of the worst case under $\PMH$ to the worst case under $\Ppm$. If this ratio can be applied to individual functions then for any particular $h$, $\ESS_{\mathrm{pm}}(h)\le \ESS_{\mathrm{MH}}(h)/R(\sigma)$. Of course, the bound that we have is actually for $\Ptilpm$, so we compare to that, too. The left panel of Figure \ref{fig.toyCLTandBinomial} plots the ESSs for both $\Ppm$ and $\Ptilpm$ against $\sigma$. It also shows the quotient of the ESS for $\PMH$ and $R_S(\sigma)$. The best that can be hoped for is for the curves based on the theory to mimic the performance of the handicapped kernel $\Ptilpm$; this is achieved for small values of $\sigma$, but the theory underestimates the ESS (hence, over-estimates the asymptotic variance) for larger values of $\sigma$.

\begin{figure}
\begin{center}
  \includegraphics[scale=0.42,angle=0]{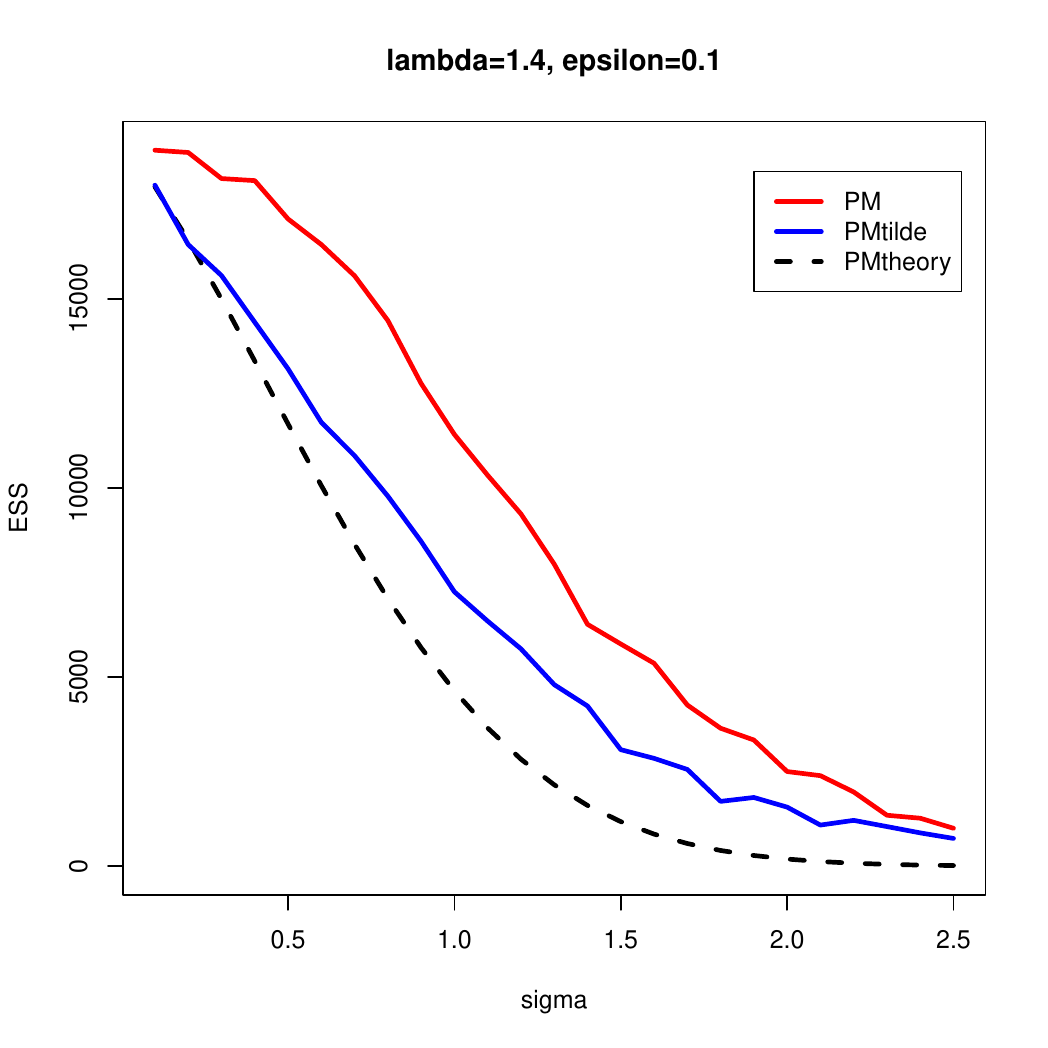}
  \includegraphics[scale=0.42,angle=0]{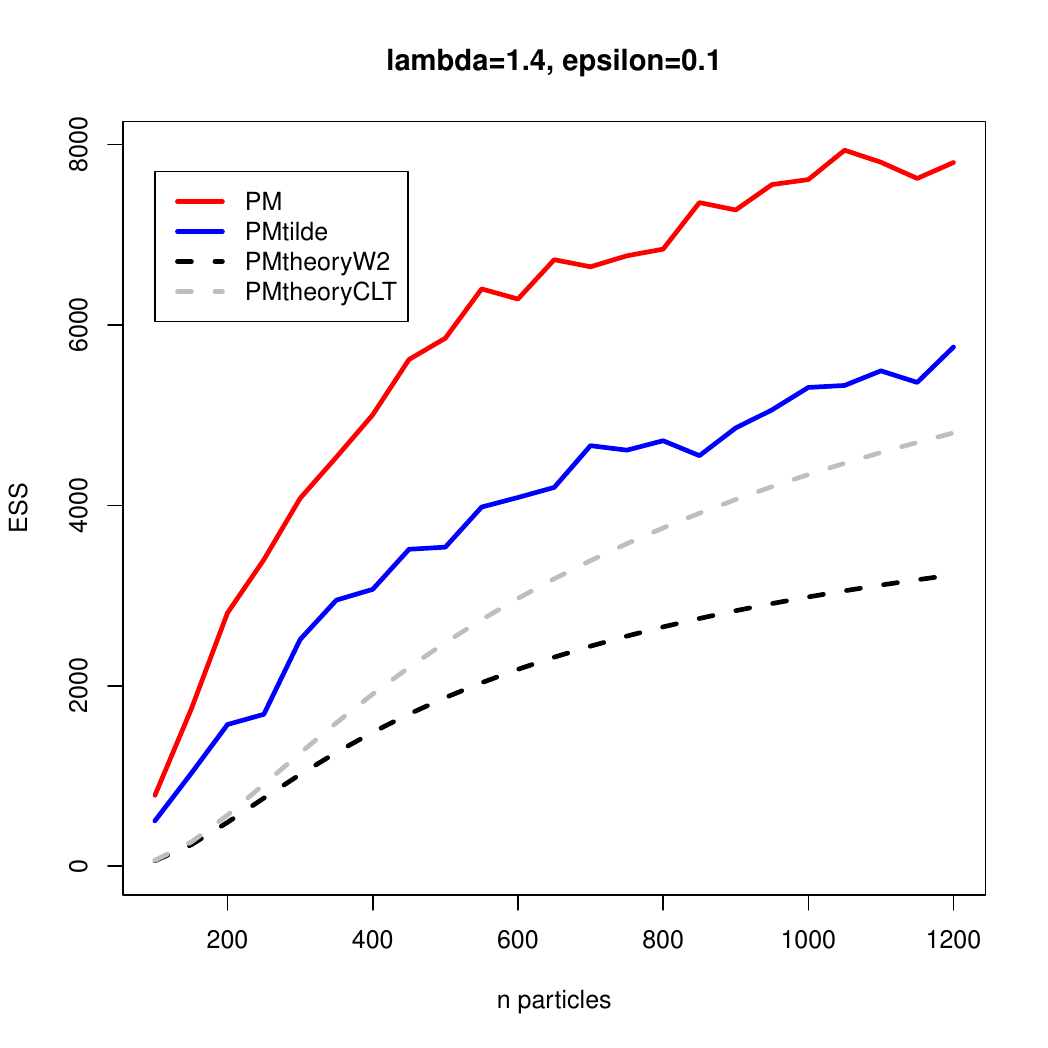}
  \caption{Left panel: Effective sample size (ESS) against $\sigma$, the standard deviation of the Gaussian additive noise in $\log W$, for $\Ppm$ and $\Ptilpm$ together with with $\ESS_{\mathrm{MH}}/\{2R_S(\sigma)\}$. Right panel: product of binomial estimators; ESS, both for $\Ppm$ and $\Ptilpm$, and $\ESS_{\mathrm{MH}}/\{2\Expects{\qtil}{W_n^2}\}$ and $\ESS_{\mathrm{MH}}/R(\widetilde{\sigma}_n)$, where $\exp(\widetilde{\sigma}_n)=\Expect{W_n^2}$, all against number of particles.
\label{fig.toyCLTandBinomial}
}
\end{center}
\end{figure}

Next, imagine $T=30$ exact observations and a known initial condition, $x_0$. We set
\[
\Prob{X_{t}=x_t|X_{t-1}=x_{t-1}}=p_t\exp\left\{-\frac{\|\theta\|^2}{2T}\right\},
\]
where $p_t\stackrel{iid}{\sim}\mathsf{Beta}(5,45)$, $t=1,\dots,T$. We place a uniform prior on $\theta$, so
\[
\pi(\theta)\propto \Prob{X_{1:T}=x_{1:T}|X_0=x_0}=\prod_{t=1}^T \left[p_t \exp\left(-\frac{1}{2T}\|\theta\|^2\right)\right]
\propto
\exp\left(-\frac{1}{2}\|\theta\|^2\right).
\]
We create an unbiased estimator for $\Prob{X_{1:T}=x_{1:T}|X_0=x_0}$ as $\prod_{t=1}^T \Phat_t$, where
\[
\Phat_t \sim \mathsf{Bin}\left(n,p_t\exp\left\{-\frac{1}{2T}\|\theta\|^2\right\}\right).
\]
In this case $R_S^{\qtil}$ in \eqref{eqn.aVarUB} is intractable, but the looser bound of $R_S^{\qtil}<2\Expects{\qtil}{W^2}$ is tractable. With $n$ particles it is
\[
2\Expects{\qtil}{W_n^2}=2\prod_{t=1}^T\left\{1+\frac{1-p_t}{np_t}\right\}.
\]
If both Assumptions \ref{ass.indepNoise} and \ref{ass.lognormalCLT} held then $\Expect{W_n^2}=\exp(\sigma^2)$. In fact, neither of these assumptions holds; nonetheless, equating $\Expect{W_n^2}$ and $\exp(\sigma^2)$ gives a nominal value $\widetilde{\sigma}_n$ and hence an approximation  $R_S^{\qtil} \approx R_S(\widetilde{\sigma}_n)$. 

We use $\PMH$ and then $\Ppm$ and $\Ptilpm$ for various values of $n$. All algorithms are run for $10^5$ iterations. For the latter kernels, the right panel of Figure \ref{fig.toyCLTandBinomial} plots $\ESS$ against $n$. We also plot both $\PMH/(2\Expect{W^2})$ and $\PMH/R_S(\widetilde{\sigma}_n)$ against $n$. As for the CLT-based example, the theory mimics the true performance well for large $n$ (smaller noise variance) but underestimates the performance more substantially for small $n$ (larger $\sigma$).

\subsection{Heavy tails}
\label{sec.num.heavy}
We provide a short numerical illustration of the heavy-tail issue and how using correlated PMMH can solve it. We use the three-dimensional Gaussian posterior in Section  \ref{sec.simple.tests}, $\pi(\theta)\propto \exp(-\|\theta\|^2/2)$, with a random-walk Metropolis proposal $\theta'\sim \mathsf{N}(\theta,\lambda^2 I_d)$ and $\lambda=1.4$. However we use the proposal distribution for $W$ with the density \eqref{eqn.heavy}. This has a finite second moment precisely when $a>2$. For three replicates in each of three scenarios we ran the algorithm for $10^6$ iterations $M=50$ times. At each iteration number, $j$, we obtained the variance of the $M$ estimates $\frac{1}{j}\sum_{i=1}^j h(\theta_i)$, where $h(\theta)$ extracts the first component of the vector $\theta$.

The first two scenarios have $a=1.5$ and $a=2.5$ respectively. The third scenario has $a=1.5$ but uses the following correlated pseudo-marginal algorithm: given $W$, set
\begin{equation}
  \label{eqn.heavyCorr}
  E=2a\log(1+W),
  ~~E'=E\cos^2 U +Z^2
  ~~\mbox{and}~~W'=\exp\{E/(2a)\}-1,
\end{equation}
where $U\sim \mathsf{Unif}[0,2\pi)$, $Z\sim \mathsf{N}(0,1)$ and $W$ are independent. Since the cumulative distribution function of $W$ under $\qtil$ is $F_{\qtil}(w)=1/(1+w)^a$, $E\sim \mathsf{Exp}(1/2)$. That $E'\sim \mathsf{Exp}(1/2)$ follows from the Box-Muller transformation \citep[]{BoxMuller1958}. This is stated formally in Proposition \ref{prop.corrPMheavy}, below, together with the manner in which it satisfies Assumption \ref{eqn.assump.corrPM}. 

  \begin{prop}
    \label{prop.corrPMheavy}
    The algorithm given in \eqref{eqn.heavyCorr} has the stationary distribution with the density given in \eqref{eqn.heavy} and satisfies Assumption \ref{eqn.assump.corrPM} with $b(w)=w^{1/6}$.
  \end{prop}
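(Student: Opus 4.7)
The plan divides cleanly into two parts: stationarity and exchangeability of $(W,W')$, and verification of Assumption~\ref{eqn.assump.corrPM}. For the marginal law, a direct change of variables shows that if $W\sim\qtil_W(\cdot;a)$ then $E:=2a\log(1+W)\sim\mathsf{Exp}(1/2)$, since the Jacobian $(1+W)/(2a)$ exactly cancels the factor $a/(1+W)^{1+a}$ in the density to leave $\tfrac12 e^{-e/2}$. The Box--Muller construction lets me write $E=X^2+Y^2$ with $X=\sqrt{E}\cos U$ and $Y=\sqrt{E}\sin U$ independent $\mathsf{N}(0,1)$, so the update $E'=E\cos^2 U+Z^2=X^2+Z^2$ is a sum of two squared standard normals and hence also $\mathsf{Exp}(1/2)$, and applying $e\mapsto\exp(e/(2a))-1$ pushes this forward to $W'\sim\qtil_W(\cdot;a)$. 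Exchangeability of $(W,W')$ is then immediate: the law of $(X,Y,Z)$ is invariant under the swap $Y\leftrightarrow Z$, which interchanges $E$ and $E'$ and hence, via the common coordinatewise monotone map, $W$ and $W'$.

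For Assumption~\ref{eqn.assump.corrPM} with $b(w)=w^{1/6}$, the required inequality simplifies to $\Expect{(W')^{2/3}\mid W=w}\ge c_b\, w^{1/3}$. My strategy is to replace the awkward fractional power by an exponential in $E'$ so that moment generating function tools become available. The key step is the elementary inequality
\[
(\exp(y)-1)^{2/3}\ge \exp(2y/3)-1\qquad(y\ge 0),
\]
which, after cubing and substituting $x=\exp(y/3)\ge 1$, reduces to the polynomial estimate $(x^2+x+1)^2-(x-1)(x+1)^3=3x^2+4x+2\ge 0$. Applied at $y=E'/(2a)$ this yields $(W')^{2/3}\ge \exp(E'/(3a))-1$.

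Conditioning on $W=w$ fixes $E=e:=2a\log(1+w)$, and the residual randomness can be written as $X^2=eB$ with $B\sim\mathsf{Beta}(1/2,1/2)$ (since $(X,Y)$ is uniform on the circle of radius $\sqrt{e}$) independent of $Z\sim\mathsf{N}(0,1)$, so
\[
\Expect{\exp(E'/(3a))\mid W=w}=\Expect{\exp(eB/(3a))}\cdot(1-2/(3a))^{-1/2},
\]
the second factor being the $\chi^2_1$ moment generating function at $1/(3a)$, finite because $a>1>2/3$. Jensen applied to the convex exponential with $\Expect{B}=1/2$ bounds the first factor below by $\exp(e/(6a))=(1+w)^{1/3}$, so writing $\alpha:=(1-2/(3a))^{-1/2}>1$,
\[
\Expect{(W')^{2/3}\mid W=w}\ge \alpha(1+w)^{1/3}-1.
\]
To close, a short calculus check gives $(1+w)^{1/3}-\epsilon w^{1/3}\ge (1-\epsilon^{3/2})^{2/3}$ uniformly in $w\ge 0$ for $\epsilon\in(0,1)$; choosing $\epsilon$ small enough that $\alpha(1-\epsilon^{3/2})^{2/3}\ge 1$ (possible since $\alpha>1$) yields $\alpha(1+w)^{1/3}-1\ge \epsilon\alpha\, w^{1/3}$ for every $w\ge 0$, so $c_b=\epsilon\alpha$ works. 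The main obstacle is selecting that intermediate exponential lower bound on $(W')^{2/3}$: it has to be small enough to be dominated by $(W')^{2/3}$ pointwise yet large enough that its conditional expectation still grows like $(1+w)^{1/3}$; once the right choice is made, the Jensen estimate and the small-$w$ matching are both one-line computations.
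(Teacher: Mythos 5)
Your proof is correct, and for the key inequality it takes a genuinely different route from the paper. The paper's appendix proof only treats the verification of Assumption \ref{eqn.assump.corrPM} (the stationarity/exchangeability part being dispatched in the main text via the Box--Muller remark, essentially as you do, and your explicit observation that swapping $Y\leftrightarrow Z$ exchanges $E$ and $E'$ is a clean way to get $\qtil(w,w')=\qtil(w',w)$). For the assumption itself, the paper argues pointwise on $W'$: writing $W'=(w+1)^{\cos^2U}\exp\{Z^2/(2a)\}-1$ and discarding the nonnegative remainder $(w+1)^{\cos^2U}-1$, it gets $W'\ge w^{\cos^2U}\{\exp(Z^2/(2a))-1\}$, raises to the power $\eta=2/3$, and applies Jensen to $\Expect{w^{\eta\cos^2U}}\ge w^{\eta/2}$, yielding $c_b=\Expect{\{\exp(Z^2/(2a))-1\}^{2/3}}>0$ in two lines. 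You instead bound the power of $W'$ from below by an exponential in $E'$ via the elementary inequality $(e^y-1)^{2/3}\ge e^{2y/3}-1$ (your polynomial verification is correct), factor the conditional expectation into the $\chi^2_1$ moment generating function at $1/(3a)$ (finite since $a>1>2/3$) times a Beta$(1/2,1/2)$ term handled by Jensen, and then absorb the leftover $-1$ by the uniform bound $(1+w)^{1/3}-\epsilon w^{1/3}\ge(1-\epsilon^{3/2})^{2/3}$, which is indeed the exact minimum. Both arguments ultimately rest on Jensen with $\Expect{\cos^2U}=1/2$; the paper's pointwise bound is shorter and avoids the final calculus step, while yours keeps the additive structure of $E'$ intact, produces a constant expressed through the explicit factor $\alpha=(1-2/(3a))^{-1/2}$ rather than an unevaluated expectation, and makes transparent where the condition $a>2/3$ (trivially satisfied here) enters.
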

  
  This second part of the proposition is proved in Appendix \ref{sec.proveCorrHeavy}. From Theorem \ref{theorem.varboundCorrPM}, therefore, only $\Expects{\qtil}{W^{4/3}}$ needs to be finite to imply a finite asymptotic variance. In particular, then, $a=1.5$ is sufficient for a finite asymptotic variance for $h_*(\theta)$ when the correlated pseudo-marginal algorithm is used. Figure \ref{fig.HeavyTail} depicts the logarithm of the estimated asymptotic variances when $h_*(\theta)$ is the first component of $\theta$, as a function of the iteration number. It shows the lack of consistency for PMMH when $a=1.5$. It also suggests that, as well as being finite, the asymptotic variance under the correlated PMMH algorithm when $a=1.5$  might be lower than that under PMMH when $a=2.5$.

\begin{figure}
\begin{center}
  \includegraphics[scale=0.6,angle=0]{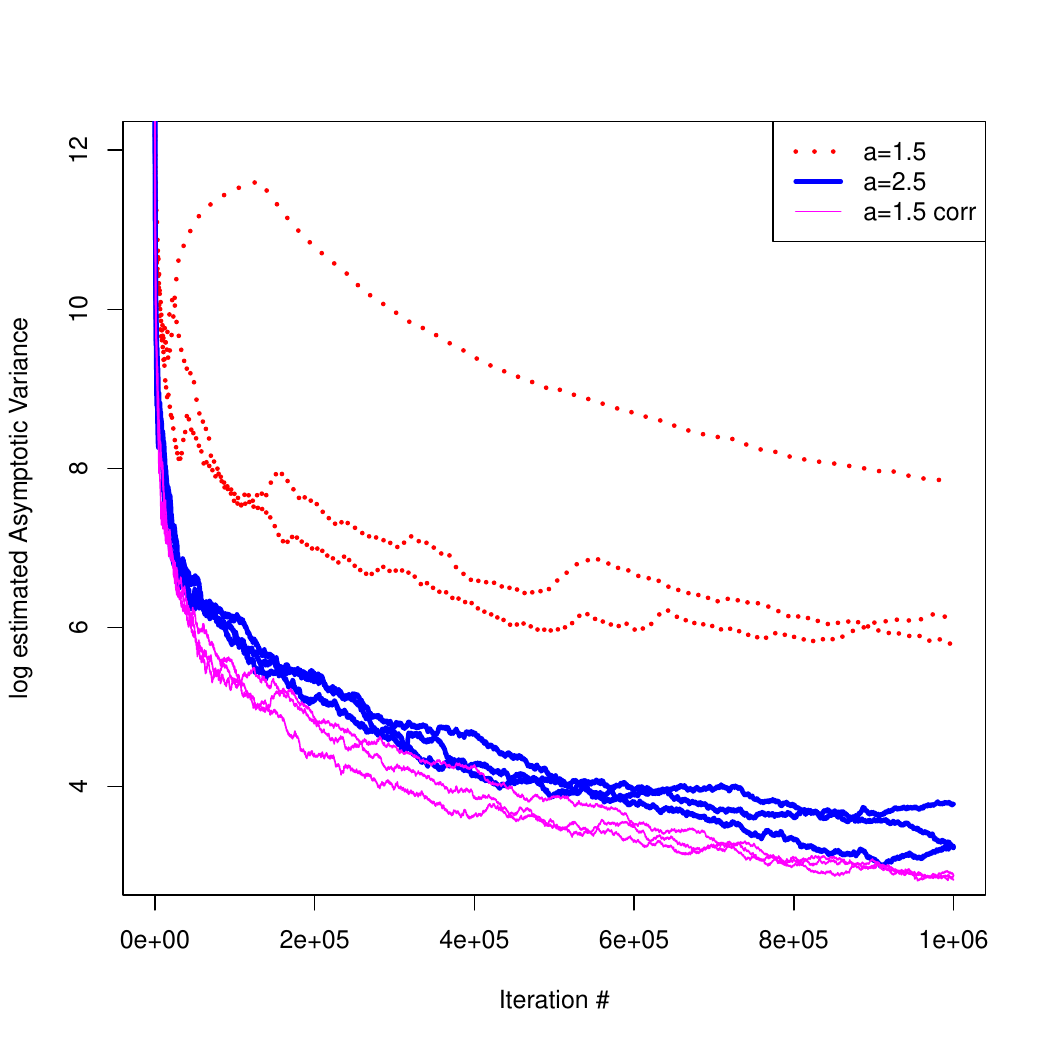}
  \caption{The logarithm of the estimated asymptotic variance as a function of iteration number from $50$ runs for each of three replicates in each of the scenarios: PMMH with $a=1.5$, PMMH with $a=2.5$ and correlated PMMH with $a=1.5$.
\label{fig.HeavyTail}
}
\end{center}
\end{figure}

\section{Discussion}
We have provided simple, explicit upper and lower bounds on the asymptotic variance of ergodic averages of a pseudo-marginal Metropolis--Hastings Markov chain. The bounds make it clear that finite asymptotic variances are closely linked with a finite variance of the multiplicative noise. When this variance is infinite, existing tuning advice can still be followed, but the resulting estimators of expectations will be poorly behaved. We suggest tuning according to the variance of the noise itself, rather than its logarithm. In well-behaved cases the two are essentially equivalent, but in badly behaved cases the new advice highlights issues that might otherwise have remained hidden. We have also shown that the asymptotic variance of ergodic averages under the correlated pseudo marginal algorithm can be finite even when those for the uncorrelated version are infinite, so that this technique could potentially rescue a poorly performing particle Metropolis--Hastings algorithm.

Theorem \ref{theorem.varboundA} and its extensions rely on Assumption \ref{ass.indepNoise}, that the proposal density, $\qtil_\theta(w)$, for the multiplicative noise does not depend on $\theta$. Whilst this appears to be a reasonable approximation in many asymptotic regimes, it would be preferable to cover the more general noise proposal mechanism. Unfortunately, Assumption \ref{ass.indepNoise} is key to obtaining the bound on the Dirichlet form of $\Ppm$ that is central to the proofs of Theorems \ref{theorem.varboundA}, \ref{theorem.varboundB} and \ref{theorem.varboundCorrPM}. We conjecture, therefore, that corresponding simple bounds are not obtainable under the more general noise regime. 

The bound in Theorem \ref{thrm.necessary} is based on failures to accept, so it might be expected to be tight in cases where the underlying Metropolis--Hastings jumps, when they occur, are large (compared with the length scales of the posterior) and the proposal is such that the acceptance rate is typically low. Its main purpose is to clearly illustrate the necessity of $\Expect{W^2}<\infty$ in many cases, so the fact that it is generally not tight is less important. A better bound would relate directly to the mixing of the underlying Metropolis--Hastings chain, perhaps using the jump chain, and is likely to be less transparent.

A finite $k$th moment of the multiplicative noise, $W$, directly implies polynomial convergence of (at least) order $k$ \cite[]{ALPW2022}. The fact that the correlated pseudo-marginal MCMC can require lower polynomial moments of $W$ for finite asymptotic variance suggests a different translation from the existence of a moment to the rate of convergence, suggesting a topic for future exploration.

\bibliographystyle{apalike}
\bibliography{PMMH.bib}

\appendix

\section{Additional results for Proof of Theorem \ref{theorem.varboundA}}

\subsection{Proof of Proposition \ref{prop.LsqInherit}}
\label{sec.proof.prop.LsqInherit}
\begin{proof}
 Suppose that there is a non-null set $A\in \cW$ such that $g_w(\theta)\notin L^2(\pi)$ for each $w\in A$. Then
    \[
    \|g(\theta,w)\|^2_{L^2(\pi\times \nu)}= \int_\cW \nu(\md w) \int_\cX \pi(\md \theta) g_w(\theta)^2
    \ge
    \int_A \nu(\md w) \|g_w\|^2_{L^2_{\pi}}= \infty,
    \]
    contradicting the fact that $g\in L^2(\pi\times \nu)$.
    \end{proof}

\subsection{Proof of Lemma \ref{lem.rearrange.square}}
\label{sec.proof.lem.rearrange.square}
We must show that for any two functions $f,g:\cX\to \mathbb{R}$ and a third function $z:\cX\times \cX \to [0,\infty)$ with $z(x,y)=z(y,x)$,
  \[
\iint_{\cX^2} z(x,y)\{f(x)-g(y)\}^2\md x \md y \ge \frac{1}{4} \iint_{\cX^2}z(x,y)[\{f(x)+g(x)\}-\{f(y)+g(y)\}]^2\md x \md y. 
  \]

\begin{proof}
\begin{align*}
  [\{f(x)+g(x)\}-\{f(y)+g(y)\}]^2&=
  [\{f(x)-g(y)\}-\{f(y)-g(x)\}]^2\\
  &\le
  [\{f(x)-g(y)\}-\{f(y)-g(x)\}]^2\\
  &~~+
[\{f(x)-g(y)\}+\{f(y)-g(x)\}]^2 \\
  &=
  2[\{f(x)-g(y)\}^2+\{f(y)-g(x)\}^2].
\end{align*}
Multiplying both sides of the inequality by the non-negative $z(x,y)$ and integrating over $\cX^2$ gives the result, since  by the symmetry of $z$,
\[
\iint_{\cX^2}2z(x,y)[\{f(x)-g(y)\}^2+\{f(y)-g(x)\}^2]\md x \md y
=
4\iint_{\cX^2}
z(x,y)\{f(x)-g(y)\}^2\md x \md y.
\]
\end{proof} 

\section{Proof of Theorem \ref{thrm.necessary}}
\label{sec.prove.Necessary}
For a stationary Markov chain $X_0,X_1,X_2,\dots$ with kernel $Q$ and a stationary distribution of $\mu$, for $h\in L_0^2(\mu)$ we define the lag-$k$ autocorrelation  to be (for any non-negative integer $i$),
\[
\rho^h_k:=\Cor{h(X_i),h(X_{i+k})}.
\]
The asymptotic variance is then \cite[e.g.,][]{Gey1992}
\begin{equation}
  \label{eqn.asympVarExpand}
\asVar_{Q}(h)=\|h\|^2_{L^2(\mu)}\left(2\sum_{k=0}^\infty \rho_k^h~-1\right). 
\end{equation}
For a reversible kernel, as we shall see, the even-lagged auto-correlations are bounded below by a multiple of the probability that the chain has not yet moved, and provided there is a left-spectral gap, the contribution from odd-numbered lags can also be bounded below. We first deal with odd-lagged auto-correlations.

\begin{prop}
  \label{prop.odd.lags}
  Let $Q$ be an ergodic, reversible Markov Kernel on $\cX$ with a stationary distribution of $\mu$. If $Q$ has a left-spectral gap of $\epsilon^L$ then for any $h\in L_0^2(\mu)$ with $\Expects{\mu}{h^2}>0$, 
  \[
\sum_{j=0}^\infty \rho^h_{2j+1}\ge -\frac{1-\epsilon^L}{2\epsilon^L-(\epsilon^L)^2}.
  \]
\end{prop}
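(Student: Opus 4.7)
The plan is to exploit the spectral decomposition of $Q$ viewed as a self-adjoint operator on $L^2(\mu)$. Reversibility makes $Q$ self-adjoint; combined with the left spectral gap, the restriction of $Q$ to the invariant subspace $L_0^2(\mu)$ has spectrum contained in $[-(1-\epsilon^L),1]$. Assuming without loss of generality that $\|h\|_{L^2(\mu)}=1$, the spectral theorem yields a probability measure $\nu_h$ supported in $[-(1-\epsilon^L),1]$ such that $\rho_k^h=\langle h,Q^k h\rangle_{L^2(\mu)}=\int \lambda^k\,d\nu_h(\lambda)$ for every $k\ge 0$. Hence for each finite $N$,
\[
S_N:=\sum_{j=0}^N \rho_{2j+1}^h=\int g_N(\lambda)\,d\nu_h(\lambda), \qquad g_N(\lambda):=\lambda\,\frac{1-\lambda^{2N+2}}{1-\lambda^2}.
\]

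The core of the argument is a pointwise lower bound on $g_N$ that is uniform in $N$. For $\lambda\ge 0$ one has $g_N(\lambda)\ge 0$ automatically. For $\lambda\in[-(1-\epsilon^L),0)$, the factor $1-\lambda^{2N+2}$ lies in $(0,1]$ and multiplies the negative quantity $\lambda/(1-\lambda^2)$, so $g_N(\lambda)\ge \lambda/(1-\lambda^2)$. The map $\lambda\mapsto \lambda/(1-\lambda^2)$ has derivative $(1+\lambda^2)/(1-\lambda^2)^2>0$ on $(-1,1)$, so it is strictly increasing there and its minimum on $[-(1-\epsilon^L),0]$ is attained at the left endpoint, taking the value $-(1-\epsilon^L)/\{2\epsilon^L-(\epsilon^L)^2\}$. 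Integrating against the probability measure $\nu_h$ then yields $S_N\ge -(1-\epsilon^L)/\{2\epsilon^L-(\epsilon^L)^2\}$ for every $N\ge 0$.

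The main subtlety is that the series $\sum_j \rho_{2j+1}^h$ need not converge absolutely: without a right spectral gap, $\nu_h$ can concentrate near $\lambda=1$, which can push the positive-lag contributions to $+\infty$. The clean way around this is to interpret the stated sum as $\liminf_{N\to\infty} S_N$, which agrees with the usual value whenever the series does converge; the uniform bound on $S_N$ then transfers immediately. The only non-trivial step is the pointwise estimate above, and in particular the observation that $1-\lambda^{2N+2}\in(0,1]$ \emph{shrinks} the magnitude of the negative part of the integrand, so the worst case for the lower bound is the geometric-series limit, giving exactly the denominator $1-\lambda^2$ evaluated at the spectral extreme $\lambda=-(1-\epsilon^L)$.
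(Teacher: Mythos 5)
Your proof is correct and follows essentially the same route as the paper: both use the spectral-measure representation $\rho^h_k=\int \lambda^k\,\eta^h(\md\lambda)$ with support pushed to the right of $\epsilon^L-1$ by the left spectral gap, and both reduce the bound to the geometric series evaluated at the spectral extreme $\lambda=-(1-\epsilon^L)$. Your only variation is bounding the partial sums via the pointwise estimate on $g_N$ (and reading the sum as a liminf) rather than bounding each $\rho^h_{2j+1}\ge-(1-\epsilon^L)^{2j+1}$ termwise and summing, which is a slightly more careful but equivalent packaging of the same argument.
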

\begin{proof}
For any $\mu$-invariant, reversible, ergodic Markov kernel, $Q$ and function $h\in L^2(\mu)$ with $\Expects{\mu}{h^2}>0$, there is \cite[e.g.][Proposition 1]{DPDK2015} a probability measure $\eta^h$ on $[-1,1)$ such that when the Markov chain is stationary, the lag-$k$ correlation is
  \[
\rho^h_k=\int_{-1}^1 \lambda^k \eta^h(\md \lambda).
  \]
  If there is a left spectral gap of $\epsilon^L$ then for all $h\in L^2(\mu)$ with $\Expects{\mu}{h^2}>0$, the support of $\eta^h$ is at most $[\epsilon^L-1,1)$. Hence
    \[
    \rho^h_{2j+1}\ge -(1-\epsilon^L)^{2j+1}.
    \]
    Thus, the sum of the odd numbered auto-correlations is
    \[
    \sum_{j=0}^\infty \rho^h_{2j+1}\ge -\sum_{j=0}^\infty (1-\epsilon^L)^{2j+1}
    =
    -\frac{1-\epsilon^L}
    {1-(1-\epsilon^L)^2},
    \]
    which gives the required result.
\end{proof}

A minimum positive contribution of the even-lagged auto-correlations is made concrete in Proposition \ref{prop.even.lags}.

\begin{prop}
  \label{prop.even.lags}
Let $Q$ be a reversible Markov kernel on $\cX$ with a stationary distribution of $\mu$. Then for any $h\in L_0^2(\mu)$ with $\Expects{\mu}{h^2}>0$,
  \[
\|h\|^2_{L^2(\mu)}\rho^h_{2j}\ge \Expects{X_0\sim \mu}{h(X_0)^2 \Prob{X_j=X_0}^2}.
  \]
\end{prop}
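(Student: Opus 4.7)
The plan is to exploit the reversibility of $Q$. Since $Q^j$ is self-adjoint on $L^2(\mu)$, one has $\|h\|^2_{L^2(\mu)}\rho^h_{2j} = \langle h, Q^{2j}h\rangle_{L^2(\mu)} = \|Q^j h\|^2_{L^2(\mu)}$. This quantity admits a transparent midpoint interpretation: writing $\|Q^j h\|^2_{L^2(\mu)} = \Expects{X_j \sim \mu}{(Q^j h)(X_j)^2}$ and noting that, under stationarity, conditional on $X_j$ the pair $(X_0, X_{2j})$ is independent with common law $Q^j(X_j, \cdot)$ (the second by the Markov property, the first by reversibility), one recovers $(Q^j h)(X_j)^2 = \mathbb{E}[h(X_0) h(X_{2j}) \mid X_j]$.

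Next, I would isolate the atomic part of the kernel at the current state. Setting $P_j(y) := Q^j(y, \{y\}) = \Prob{X_j = X_0 \mid X_0 = y}$, decompose $(Q^j h)(y) = P_j(y)\, h(y) + R_j(y)$, where $R_j(y) := \int_{z \ne y} h(z)\, Q^j(y, dz)$. Squaring and integrating against $\mu$ yields
\[
\|Q^j h\|^2_{L^2(\mu)} \;=\; \|P_j h\|^2_{L^2(\mu)} + 2\langle P_j h,\, R_j\rangle_{L^2(\mu)} + \|R_j\|^2_{L^2(\mu)},
\]
and the first term on the right-hand side is precisely $\Expects{X_0 \sim \mu}{h(X_0)^2\, \Prob{X_j = X_0}^2}$---the desired lower bound.

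The main obstacle---and the part of the argument I expect to be the most delicate---is to show that the remainder $2\langle P_j h,\, R_j\rangle_{L^2(\mu)} + \|R_j\|^2_{L^2(\mu)}$ is non-negative. To attack it, I would reinterpret this quantity probabilistically, as the contribution to $\mathbb{E}[h(X_0) h(X_{2j})]$ from the complementary event $E^c = \{X_0, X_j, X_{2j}\text{ not all equal}\}$. Partitioning $E^c$ into $\{X_0 = X_j \ne X_{2j}\}$, $\{X_0 \ne X_j = X_{2j}\}$, and $\{X_0 \ne X_j,\, X_j \ne X_{2j}\}$, reversibility (path reversal $X_0 \leftrightarrow X_{2j}$) equates the contributions of the first two pieces with the common value $\langle P_j h, R_j\rangle_{L^2(\mu)}$, while the third contributes $\|R_j\|^2_{L^2(\mu)}$. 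The remaining task is to argue that these symmetric pieces combine positively, leveraging the self-adjointness on $L^2(\mu)$ of the atom-removed operator $h \mapsto R_j$ that is induced by reversibility.
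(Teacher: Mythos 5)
Your opening reduction is correct and coincides with the start of the paper's argument: reversibility and the Markov property give $\|h\|^2_{L^2(\mu)}\rho^h_{2j}=\|Q^jh\|^2_{L^2(\mu)}$, and the decomposition $(Q^jh)(y)=P_j(y)h(y)+R_j(y)$ with $P_j(y)=Q^j(y,\{y\})$ exhibits $\Expects{X_0\sim\mu}{h(X_0)^2\Prob{X_j=X_0}^2}=\|P_jh\|^2_{L^2(\mu)}$ as the atomic contribution. But your proof stops exactly where all the content lies: the non-negativity of $2\langle P_jh,R_j\rangle_{L^2(\mu)}+\|R_j\|^2_{L^2(\mu)}$ is only announced as "the remaining task", with a sketch of how symmetry might make the pieces "combine positively". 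That is a genuine gap, and it cannot be closed at this level of generality, because the quantity can be strictly negative. Take $\cX=\{1,2\}$, $\mu$ uniform, $Q(x,\cdot)=\mu$ for both states (reversible), $h=(1,-1)\in L^2_0(\mu)$ and $j=1$: then $Qh\equiv 0$, so $\|h\|^2_{L^2(\mu)}\rho^h_{2}=\|Qh\|^2_{L^2(\mu)}=0$, while $\Prob{X_1=X_0\mid X_0=x}=1/2$ for each $x$, so $\Expects{\mu}{h(X_0)^2\Prob{X_1=X_0}^2}=1/4$; equivalently $2\langle P_1h,R_1\rangle_{L^2(\mu)}+\|R_1\|^2_{L^2(\mu)}=-1/4<0$. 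Self-adjointness of the atom-removed operator gives no positivity here, so no argument of the kind you outline (nor any argument using reversibility alone) can finish the proof.

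For comparison, the paper's proof passes this point by inserting the indicator before squaring, i.e.\ by asserting
\[
\Expects{X_j\sim Q^{j}(X_0,\cdot)}{h(X_{j})}^2\;\ge\;\Expects{X_j\sim Q^{j}(X_0,\cdot)}{h(X_{j})1(X_j=X_0)}^2,
\]
which is exactly the assertion you flagged as delicate: for sign-varying $h$ --- and every non-zero $h\in L^2_0(\mu)$ changes sign --- discarding part of the integrand inside a square is not a monotone operation, and the two-state example above violates both this display and the stated conclusion of the proposition. So your instinct about where the difficulty sits is sound; the step genuinely requires hypotheses beyond reversibility (or a weakened form of the bound), and as written neither your sketch nor the dropped-indicator argument establishes it.
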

\begin{proof}
\begin{align*}
\rho_{2j}
=
\Expect{h(X_0)h(X_{2j})}
&=
\Expects{X_0\sim \mu,X_{-j}\sim Q^{-j}(X_0,\cdot), X_j \sim Q^j(X_0,\cdot)}{h(X_{-j})h(X_{j})}\\
\mbox{(Markov property)}&=
\Expects{X_0\sim \mu}{
  \Expects{X_{-j}\sim Q^{-j}(X_0,\cdot)}{h(X_{-j})}
\Expects{X_j \sim Q^j(X_0,\cdot)}{h(X_{j})}}\\
\mbox{(reversibility)}&=
\Expects{X_0\sim \mu}{
  \Expects{X_j\sim Q^{j}(X_0,\cdot)}{h(X_{j})}^2}\\
&\ge
\Expects{X_0\sim \mu}{
  \Expects{X_j\sim Q^{j}(X_0,\cdot)}{h(X_{j})1(X_j=X_0)}^2}\\
&=
\Expects{X_0\sim \mu}{h(X_0)^2
  \Expects{X_j\sim Q^{j}(X_0,\cdot)}{1(X_j=X_0)}^2}\\
&=
\Expects{X_0\sim \mu}{h(X_0)^2
  \Prob{X_j=X_0}^2}.
\end{align*}
\end{proof}

The lower bound on the contribution of the odd-lagged auto-correlations is itself bounded but the contribution from the even-numbered lags may be infinite (indeed, that is the motivation for the theorem), so we take some care with these terms. Combining Propositions \ref{prop.odd.lags} and \ref{prop.even.lags} through \eqref{eqn.asympVarExpand}, we obtain
\begin{align}
  \nonumber
\asVar_Q(h)
&\ge
2\lim_{k\to \infty}\Expects{X_0\sim \mu}{h(X_0)^2\sum_{j=0}^{k-1} \Prob{X_j=X_0}^{2}}
-\|h\|^2_{L^2(\mu)}-2\frac{1-\epsilon^L}{2\epsilon^L-(\epsilon^L)^2} \|h\|^2_{L^2(\mu)}\\
\label{eqn.asympVarSimpl}
&=
2\lim_{k\to \infty}\Expects{X_0\sim \mu}{h(X_0)^2\sum_{j=0}^{k-1} \Prob{X_j=X_0}^{2}}
-\frac{2-(\epsilon^L)^2}{2\epsilon^L-(\epsilon^L)^2}\|h\|^2_{L^2(\mu)}.
\end{align}

One way to achieve $(\theta_{j},w_{j})=(\theta_0,w_0)$ is to reject every proposal up to and including the $j$th, so let $\alpha_{pm}(\theta,w;\theta',w')$ be as given in \eqref{eqn.alphaPM}, and define
\begin{align*}
  \alpha_{pm}(\theta,w)&:=\Expects{(\theta',W')\sim q(\theta'|\theta)\qtil_{\theta'}(w')}{\alpha_{pm}(\theta,w;\theta',W')},\\
  S_k(\theta,w)&:=\sum_{j=0}^{k-1}\{1-\alpha_{pm}(\theta,w)\}^{2j}.
\end{align*}
For a PMMH chain, $\Prob{X_j=X_0}\equiv\Prob{(\theta_j,W_j)=(\theta_0,W_0)}\ge \{1-\alpha_{pm}(\theta,W)\}^j$. Then, since $1-\alpha\le \exp(-\alpha)$,
\[
S_k(\theta,w)
=
\frac{1-\{1-\alpha_{pm}(\theta,w)\}^{2k}}{1-\{1-\alpha_{pm}(\theta,w)\}^{2}}
\ge
\frac{1-\exp\{-2k\alpha_{pm}(\theta,w)\}}{2\alpha_{pm}(\theta,w)}.
\] 

We now create a simpler bound for $S_k$ in terms of $w$ and $r(\theta)$.
\begin{prop}
  \label{prop.monotonDecreasing}
  For $x\ge 0$, $[1-\exp(-x)]/x\ge 1/(1+x)$.
\end{prop}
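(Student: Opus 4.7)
The plan is to reduce the claim to the elementary inequality $e^x \ge 1+x$ via equivalent rearrangements. First, note that for $x=0$ the claim is understood in the limiting sense: $\lim_{x\downarrow 0}(1-e^{-x})/x = 1 = 1/(1+0)$, so equality holds there. For $x>0$, both $x$ and $1+x$ are strictly positive, so multiplying the desired inequality through by $x(1+x)$ preserves its direction, giving the equivalent statement
\[
(1+x)(1-e^{-x}) \ge x.
\]

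Next, I would expand the left-hand side and simplify: $(1+x)(1-e^{-x}) = 1 + x - (1+x)e^{-x}$, so the inequality reduces to $1 - (1+x)e^{-x} \ge 0$, and multiplying by $e^x>0$ yields the equivalent
\[
e^x \ge 1+x.
\]
This last inequality is standard: it follows, for instance, from the fact that $\exp$ is convex and $1+x$ is the tangent to $e^x$ at $x=0$, or directly from the Taylor series $e^x = \sum_{k\ge 0} x^k/k!$, whose terms are non-negative for $x\ge 0$. Reversing the chain of equivalences delivers the claim for $x>0$, and the case $x=0$ was handled separately.

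There is no substantive obstacle here; the only care point is ensuring the manipulation at $x=0$ is treated as a limit (since the formula $[1-\exp(-x)]/x$ is literally of indeterminate form at $0$), which is handled by the one-line L'Hopital or Taylor computation above. $\square$
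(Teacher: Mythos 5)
Your proof is correct and rests on the same key inequality $e^x \ge 1+x$ that the paper uses; the paper simply rearranges it as $\exp(-x)\le 1/(1+x)$, hence $1-\exp(-x)\ge x/(1+x)$, which is your argument read in the other direction. Your extra care at $x=0$ (interpreting the quotient as a limit) is a harmless refinement the paper does not bother with.
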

\begin{proof}
  Since $\exp(x)\ge 1+x$, $\exp(-x)\le \frac{1}{1+x}$, so $1-\exp(-x)\ge x/(1+x)$.
\end{proof}
\begin{prop}
  \label{prop.alphabarbound}
  \[
  \alpha_{pm}(\theta,w)\le
  \frac{2r(\theta)}{w+r(\theta)}.
  \]
\end{prop}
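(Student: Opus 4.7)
The plan is to reduce the two-variable integral defining $\alpha_{pm}(\theta,w)$ to a one-dimensional quantity via concavity, and then finish with a scalar inequality.

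First I would rewrite
\[
\alpha_{pm}(\theta,w)=\Expects{\theta'\sim q(\cdot|\theta),\,W'\sim \qtil_{\theta'}}{1\wedge \frac{r(\theta,\theta')W'}{w}},
\]
and apply Jensen's inequality to the concave function $y\mapsto 1\wedge y$ on $[0,\infty)$. Because unbiasedness gives $\Expects{\qtil_{\theta'}}{W'}=1$ for every $\theta'$, the inner $W'$-expectation decouples from the $\theta'$-integral, leaving
\[
\alpha_{pm}(\theta,w)\le 1\wedge \Expects{\theta'\sim q(\cdot|\theta)}{\frac{r(\theta,\theta')}{w}}=1\wedge \frac{r(\theta)}{w},
\]
where the last equality is just the definition \eqref{eqn.define.rbar}.

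Second, I would establish the elementary scalar bound
\[
1\wedge y \le \frac{2y}{1+y}\qquad (y\ge 0),
\]
by splitting into the cases $y\le 1$ (where the inequality is $y(1+y)\le 2y$, i.e., $y\le 1$) and $y\ge 1$ (where it is $1+y\le 2y$, i.e., $y\ge 1$). Applying this with $y=r(\theta)/w$ gives
\[
\alpha_{pm}(\theta,w)\le \frac{2r(\theta)/w}{1+r(\theta)/w}=\frac{2r(\theta)}{w+r(\theta)},
\]
which is the claim.

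There is no real obstacle here: the Jensen step is immediate once one notices that $\min(1,\cdot)$ is concave, and the decoupling of the $W'$ and $\theta'$ integrals is exactly the unbiasedness that is already built into the construction of $\Ppm$. The only thing to be a little careful about is keeping the $w$ outside the expectation (it is a fixed argument of $\alpha_{pm}(\theta,w)$, not a random variable), so that the second factor in the acceptance ratio is truly linear in the randomness and the Jensen bound is sharp in $w$.
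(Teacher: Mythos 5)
Your proof is correct and follows essentially the same route as the paper: Jensen's inequality for the concave map $x\mapsto 1\wedge x$ (using unbiasedness of $W'$ and the definition of $r(\theta)$), followed by the elementary bound $1\wedge y\le 2y/(1+y)$. The only difference is that you spell out the case-split verification of the scalar inequality and the decoupling of the $W'$-integral, which the paper leaves implicit.
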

\begin{proof}
  Jensen's inequality applied to $x\to 1\wedge x$  gives
\[
  \alpha_{pm}(\theta,w)\le 1\wedge \Expects{\theta'\sim q(\cdot|\theta),W'\sim \qtil_\theta(\cdot)}{\frac{W'}{w}r(\theta,\theta')}
  =
  1\wedge \frac{1}{w}r(\theta)
  \le
  2\frac{r(\theta)}{w+r(\theta)},
  \]
  where the final inequality uses $1\wedge x \le 2 x/(1+x)$. 
\end{proof}

From Proposition \ref{prop.monotonDecreasing} then Proposition \ref{prop.alphabarbound},
\[
T_k(\theta,w):=\frac{1}{k}S_k(\theta,w)\ge \frac{1}{1+2k\alpha_{pm}(\theta,w)}
\ge
\frac{1}{1+4kr(\theta)/\{w+r(\theta)\}}
=
\frac{w+r(\theta)}{w+(4k+1)r(\theta)}
\]
So
\[
S_k(\theta,w) \ge \frac{w+r(\theta)}{w/k+(4+1/k)r(\theta)}
\ge
\frac{w+r(\theta)}{1/\sqrt{k}+(4+1/k)r(\theta)}1(w\le \sqrt{k}).
\]

Thus \[
\sum_{j=0}^{k-1}\Prob{(\theta_j,W_j)=(\theta_0,W_0)}^2\ge S_k(\theta_0,W_0)\ge 
\frac{W_0+r(\theta_0)}{1/\sqrt{k}+(4+1/k)r(\theta_0)}1(W_0\le \sqrt{k}).
\]

Hence
\begin{align*}
  \lim_{k\to \infty}&
  \Expects{\theta_0,W_0\sim \pi\times \nu}
          {h(\theta_0,W_0)^2\sum_{j=0}^{k-1} \Prob{(\theta_j,W_j)=(\theta_0,W_0)}^{2}}\\
&\ge
\lim_{k\to \infty}\Expects{(\theta,W)\sim \pi\times \nu}{h(\theta,W)^2
  \frac{W+r(\theta)}{1/\sqrt{k}+(4+1/k)r(\theta)}1(W\le \sqrt{k})}\\
&\ge
\Expects{(\theta,W)\sim \pi\times \nu}{h(\theta,W)^2
  \frac{W+r(\theta)}{4r(\theta)}}\\
&=
\frac{1}{4}\|h\|^2_{L^2(\pi\times \nu)} 
+ \frac{1}{4}\Expects{(\theta,W)\sim \pi\times \nu}{h(\theta,W)^2
  \frac{W}{r(\theta)}},
\end{align*}
where the final inequality follows from Fatou's Lemma.

Substituting into \eqref{eqn.asympVarSimpl} gives
\[
\asVar_{\Ppm} \ge
\frac{1}{2}\|h\|^2_{L^2(\pi\times \nu)} 
+ \frac{1}{2}\Expects{(\theta,W)\sim \pi\times \nu}{h(\theta,W)^2
  \frac{W}{r(\theta)}}
-
\frac{2-(\epsilon^L)^2}{2\epsilon^L-(\epsilon^L)^2}\|h\|^2_{L^2(\pi\times \nu)}.
\]
However,
\[
\frac{1}{2}- \frac{2-\epsilon^2}{2\epsilon-\epsilon^2}
=
-\frac{2-\epsilon-\epsilon^2/2}{\epsilon(2-\epsilon)}
=-\frac{1}{\epsilon}+\frac{\epsilon^2/2}{\epsilon(2-\epsilon)}
=
-\frac{1}{\epsilon}+\frac{\epsilon}{4-2\epsilon}.
\]
The form presented in the statement of the theorem follows on recalling that $\nu(w)=w\qtil_{\theta}(w)$. $\square$




\section{Proofs of generalisations of Theorem \ref{theorem.varboundA}}

\subsection{Proof of Theorem \ref{theorem.varboundB}}
\label{app.proveBoundB}

We now consider a general function $h(\theta,w)\in L^2(\pi\times \nu)$. As in \eqref{eqn.define.gw} this function engenders and infinity of functions $h_w:\cX\to \mathbb{R}$, indexed by $w\in \cW$: $h_w(\theta)=h(\theta,w)$.

We require the following:

\begin{prop}
  \label{prop.usePD}
  For $g_w,g_{w'}\in L^2(\pi)$,
  \[
  \int_{\cW^2}\qtil(w)\qtil(w') \{w\wedge w'\} \langle g_{w},g_{w'}\rangle_{L^2(\pi)} \md w \md w'\ge 0.
  \]
\end{prop}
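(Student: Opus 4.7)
The plan is to exploit the fact that $w \wedge w'$ is a positive-semidefinite kernel on $[0,\infty)\times [0,\infty)$ because it is the covariance kernel of standard Brownian motion. Concretely, I would use the representation
\[
w\wedge w' = \int_0^\infty \mathbf{1}(t\le w)\,\mathbf{1}(t\le w')\, \md t,
\]
which is valid for $w,w'\ge 0$, and substitute it into the left-hand side of the claimed inequality.

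Next I would exchange the order of integration (Tonelli's theorem applies to the non-negative part; a Cauchy--Schwarz / absolute integrability check handles the inner-product part, which is finite $\nu\times\nu$-a.e.\ by Proposition \ref{prop.LsqInherit}). This yields
\[
\int_0^\infty \int_{\cW^2}\qtil(w)\qtil(w')\,\mathbf{1}(t\le w)\mathbf{1}(t\le w')\,\langle g_w,g_{w'}\rangle_{L^2(\pi)}\, \md w\, \md w'\, \md t.
\]
Expanding the inner product as $\int_\cX \pi(\md\theta)\, g_w(\theta)g_{w'}(\theta)$ and swapping the integrations in $\theta$ with those in $w,w'$, the inner double integral in $w,w'$ factorises as a square:
\[
\int_\cX \pi(\md\theta)\left(\int_{\cW} \qtil(w)\mathbf{1}(t\le w)\, g_w(\theta)\, \md w\right)^{\!2}\ \ge\ 0.
\]
Since this holds for every $t\ge 0$, integrating in $t$ preserves non-negativity, which gives the result.

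The only obstacle I anticipate is the routine bookkeeping around interchanging the order of integration given that $g_w(\theta)$ can take either sign; I would address this by splitting $g_w(\theta) = g_w^+(\theta) - g_w^-(\theta)$ and applying Tonelli to each non-negative piece separately, using Proposition \ref{prop.LsqInherit} together with the finiteness of $\iint \qtil(w)\qtil(w')(w\wedge w')\|g_w\|_{L^2(\pi)}\|g_{w'}\|_{L^2(\pi)}\,\md w\,\md w'$ (which follows from $\|g\|_{L^2(\pi\times\nu)}<\infty$ and $w\wedge w' \le \sqrt{ww'}$) to justify the swap.
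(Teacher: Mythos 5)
Your argument is essentially identical to the paper's proof: it uses the same representation $w\wedge w'=\int_0^\infty \mathbf{1}(t\le w)\mathbf{1}(t\le w')\,\md t$, interchanges the integrals, and recognises the resulting inner double integral as $\int_\cX \pi(\md\theta)\bigl(\int_\cW \qtil(w)\mathbf{1}(t\le w)g_w(\theta)\,\md w\bigr)^2\ge 0$ (indeed, your indicators are written with the correct orientation, whereas the paper's display has them typographically reversed). The extra care you take over the Tonelli/Fubini interchange is a harmless refinement of the same approach.
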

\emph{Proof}: Denote the integral by $I$. Then because $w\wedge w'=\int_0^\infty 1(w\le z)1(w'\le z)\md z$,
\begin{align*}
  I&=\int_{\cW^2}\int_{0}^\infty\int_{\cX}
  \pi(\theta)\qtil(w)\qtil(w') 1(w\le z)1(w'\le z)
  g_{w}(\theta)g_{w'}(\theta) \md w \md w' \md \theta \md z\\
  &=
  \int_{0}^\infty\int_{\cX}\pi(\theta)
  \left\{\int_{\cW}\qtil(w)1(w\le z)g_w(\theta) \md w\right\}^2\md \theta \md z\\
  &\ge 0.~\square
\end{align*}
    
From \eqref{eqn.DirichletBoundMain} and Proposition \ref{prop.usePD},
\begin{align*}
  \cE_{\Ppm}(g)
  &\ge
  \frac{\epsMH}{4} \int_{\cW\times \cW}\qtil(w)\qtil(w')
  \{w\wedge w'\}~\left\{\|g_w\|^2_{L^2(\pi)}+\|g_{w'}\|^2_{L^2(\pi)}\right\} \md w \md w'.
\end{align*}

Replicating the derivation in Stage 2 of the proof of Theorem \ref{theorem.varboundA} we must take extra care because $h_w(\theta)\ne h_{w'}(\theta)$.
\begin{align*}
  \langle h, g\rangle_{L^2(\pi\times \nu)}
  &=
  \int \pi(\theta)w\qtil(w)  h_w(\theta) g_w(\theta) \md \theta \md w\\
  &=
  \int w\qtil(w) \langle h_w, g_w\rangle_{L^2(\pi)} \md w\\
  &=
  \int w\qtil(w) w'\qtil(w')\langle h_w, g_w\rangle_{L^2(\pi)} \md w\md w'\\
  &=
  \frac{1}{2}
 \int w\qtil(w) w'\qtil(w')\left\{\langle h_w, g_w\rangle_{L^2(\pi)}+\langle h_{w'},g_{w'}\rangle_{L^2(\pi)}\right\} \md w\md w', 
\end{align*}
by relabelling $w\leftrightarrow w'$ and averaging.

The Cauchy-Schwarz inequality then leads to the bound
\[
\langle h, g\rangle_{L^2(\pi\times \nu)}
\le
\frac{1}{2}\int \qtil(w) \qtil(w')w w'
\left\{ \|h_w\|_{L^2(\pi)} \|g_w\|_{L^2(\pi)}+\|h_w\|_{L^2(\pi)} \|g_w\|_{L^2(\pi)}\right\} \md w \md w'.
\]
We are in a position to prove Theorem \ref{theorem.varboundB} via the variational form \eqref{eqn.varRepvar}. Notice that $4\langle h,g\rangle_{L^2(\pi\times \nu)}-2\cE_{\PMH}(g)$ is the sum of two terms, the first of which is
\[
\int \qtil(w)\qtil(w')
\left\{
2 \|h_w\|_{L^2(\pi)}\|g_w\|_{L^2(\pi)}ww' - \frac{1}{2}\epsMH(w\wedge w')\|g_w\|^2_{L^2(\pi)}
\right\}\md w \md w'. 
\]
The second is the same as the first but with $w\leftrightarrow w'$. Again noting that $ax-cx^2\le a^2/(4c)$, the first term is no greater than
\[
\frac{2}{\epsMH}
\int \qtil(w)\qtil(w') \frac{w^2(w')^2\|h_w\|_{L^2(\pi)}^2}{w\wedge w'} \md w \md w'
=
\frac{2}{\epsMH}
\int \qtil(w)\qtil(w') w w' (w \vee w') \|h_w\|_{L^2(\pi)}^2 \md w \md w'.
\]
Thus
\begin{align*}
  \asVar_{\Ppm}(h)+\|h\|^2_{L^2(\pi\times \nu)}
&=4\langle h,g\rangle_{L^2(\pi\times \nu)}-2\cE_{\PMH}(g)\\
&\le
\frac{2}{\epsMH}
\int \qtil(w)\qtil(w') w w' (w \vee w') \{\|h_w\|_{L^2(\pi)}^2+\|h_{w'}\|_{L^2(\pi)}^2\}\md w \md w',
\end{align*}
as required. $\square$

\subsection{Proof of Theorem \ref{theorem.varboundCorrPM}}
Firstly, the stationary density of the correlated pseudo-marginal chain is $\nu(w)=w\qtil(w)$, the same as that or the uncorrelated chain, since
\[
\pi(\theta)q(\theta'|\theta) w\qtil(w)\qtil(w'|w) \left\{1\wedge \frac{w'\pi(\theta')q(\theta|\theta')}{w \pi(\theta) q(\theta'|\theta)}\right\}
\]
is invariant to $(\theta,w)\leftrightarrow(\theta',w')$ because $\qtil(w)\qtil(w'|w)=\qtil(w,w')=\qtil(w',w)$. Thus \eqref{eqn.defineEPtil} becomes
\[
\cE_{\Ptil}(g)\ge \cE_{\Ptilpm}(g)=\frac{1}{2}\int \qtil(w,w')(w\wedge w') D(w,w') \md w \md w'
\]
with the same definition of $D(w,w')$. Hence, \eqref{eqn.DirichletBoundMain} becomes
\begin{equation}
  \label{eqn.DirichletBoundCorr}
  \cE_{pm}\ge
  \frac{1}{4}\epsMH\int \qtil(w,w') \{w\wedge w'\} \|g_w+g_{w'}\|^2_{L^2(\pi)}\md w \md w'.
\end{equation}

Next, consider the quantity
\[
p_*(w,w'):= w^{1/2}b(w) \qtil(w,w') b(w'){w'}^{1/2}.
\]
From \eqref{eqn.corrPMcond},
\begin{align*}
  \int p_*(w,w') \md w'
  &= w^{1/2}b(w) \qtil(w)\int {w'}^{1/2}b(w) \qtil(w'|w) \md w'\\
&=
w^{1/2}b(w) \qtil(w) \Expect{b(W){W'}^{1/2}|W=w}\\
&\ge
c_{b}w^{1/2}b(w) \qtil(w) w^{1/2}/b(w)\\
&=c_b w\qtil(w)=c_b \nu(w).
\end{align*}

Using this inequality
\begin{align*}
  \langle h_*,g\rangle_{L^2(\pi\times \nu)}
  &=
\int_\cW w\qtil(w) \langle h_*, g_w\rangle_{L^2(\pi)} \md w
\le
c_b^{-1}\iint_{\cW^2} p_*(w,w')\langle h_*, g_w\rangle_{L^2(\pi)} \md w \md w'\\
&=
c_b^{-1}\iint_{\cW^2} p_*(w,w')\langle h_*, g_{w'}\rangle_{L^2(\pi)} \md w \md w',
\end{align*}
where the final equality follows from the symmetry of $p_*$. Hence
\begin{align}
  \nonumber
4\langle h_*, g\rangle_{L^2(\pi\times \nu)}
&\le
2c_{b}^{-1}\iint_{\cW^2} p_*(w,w')\langle h_*, g_w+g_{w'}\rangle_{L^2(\pi)} \md w \md w'\\
\nonumber
&\le
\iint p_*(w,w') 2c_b^{-1}\|h_*\|_{L^2(\pi)} \|g_w+g_{w'}\|_{L^2(\pi)} \md w \md w'\\
&=
\iint \qtil(w,w') 2c_b^{-1}\|h_*\|_{L^2(\pi)} w^{1/2}b(w)b(w'){w'}^{1/2}\|g_w+g_{w'}\|_{L^2(\pi)} \md w \md w'.
\label{eqn.innerProdBoundCorr}
\end{align}

Combining \eqref{eqn.DirichletBoundCorr} and \eqref{eqn.innerProdBoundCorr} through \eqref{eqn.varRepvar}, $\asVar_{\Ppm}(h_*) +\|h_*\|^2$ is
\begin{align*}
&\le
\int\qtil(w,w')
\left\{2 c_{b}^{-1}w^{1/2}b(w)b(w') {w'}^{1/2} \|h_*\|_{L^2(\pi)} \|g_w+g_{w'}\|_{L^2(\pi)}
-\frac{1}{2} (w\wedge w')\|g_w+g_{w'}\|^2_{L^2(\pi)}\right\}\md w \md w'\\
&\le
\int\qtil(w,w') \frac{4c_{b}^{-2}\|h_*\|^2_{L^2(\pi)}wb(w)^2b(w')^2w'}{2(w\wedge w')} \md w \md w'\\
&=
2\|h_*\|^2_{L^2(\pi)}c_{\gamma}^{-2}\int_{\cW^2}\qtil(w,w') b(w)^2b(w')^2(w \vee w') \md w \md w',
\end{align*}
since $w w' = (w\vee w')(w\wedge w')$. 
The final inequality follows as when $c> 0$, $ax-cx^2=-c[x-a/(2c)]^2+a^2/(4c) \le a^2/4c$. $\square$

\subsection{Proof of Proposition \ref{prop.corrPMheavy}}
\label{sec.proveCorrHeavy}
  With the formulation in \eqref{eqn.heavyCorr} for $\eta>0$,
  \begin{align*}
  W' &= (w+1)^{\cos^2 U} \exp\left(\frac{Z^2}{2a}\right)-1
  =
  (w+1)^{\cos^2 U} \left\{\exp\left(\frac{Z^2}{2a}\right)-1\right\}+(w+1)^{\cos^2 U}-1\\
  &\ge w^{\cos^2 U} \left\{\exp\left(\frac{Z^2}{2a}\right)-1\right\}.
  \end{align*}
  So, by Jensen's inequality
  \[
  \Expect{{W'}^\eta|W=w} \ge w^{\eta {\Expect{\cos^2 U}}}\Expect{\left\{\exp\left(\frac{Z^2}{2a}\right)-1\right\}^\eta}
  =
  c_{\eta}w^{\eta/2},
  \]
  where $c_\eta=\Expect{\left\{\exp\left(\frac{Z^2}{2a}\right)-1\right\}^\eta}>0$.
Setting $\eta=2/3$, we see $\qtil(w,w')$ satisfies Assumption \ref{eqn.assump.corrPM} with $b(w)=w^{1/6}$ as required.    
 
\section{Proofs of CLT-based and propositions}
  
\subsection{Proof of Proposition \ref{prop.PMCLT}}
\label{sec.prove.PMCLT}
  Under Assumption \ref{ass.lognormalCLT}, $\log W \sim \mathsf{N}(-\sigma^2/2,\sigma^2)$ and $\log W' \sim \mathsf{N}(-\sigma^2/2,\sigma^2)$ are independent. Hence
  \[
\log(WW')\sim \mathsf{N}(-\sigma^2,2\sigma^2)
~~~\mbox{and}~~~
\log(W'/W)\sim \mathsf{N}(0,2\sigma^2)
\]
are also independent. We write $\log(WW')=-\sigma^2+\sigma\sqrt{2}Z'$ and
$\log(W'/W)=\sigma\sqrt{2}Z$ where $Z$ and $Z'$ are independent standard Gaussians. Then
\begin{align*}
  R(\sigma)
  &=
  \Expect{WW'(W\vee W')}
  =
  \Expect{(WW')^{3/2} \left( \frac{W}{W'}\vee \frac{W'}{W} \right)^{1/2}}\\
  &=
  \exp\left(-\frac{3}{2}\sigma^2\right)
  \Expect{\exp\left(\frac{3\sigma \sqrt{2}}{2}Z'\right)}
  \Expect{\exp\left(\frac{1}{2}\sigma\sqrt{2}\{(-Z)\vee Z\}\right)}\\
  &=
  \exp\left(\frac{3}{4}\sigma^2\right)
  \Expect{\exp\left(\frac{1}{2}\sigma\sqrt{2}~|Z|\right)}\\
  &=
 2 \exp\left(\frac{3}{4}\sigma^2\right)
 \int_0^\infty\exp\left(-\frac{1}{2}z^2+\frac{1}{2}\sigma\sqrt{2}z\right)\\
 &=
 2 \exp\left(\sigma^2\right)
 \int_0^\infty\exp\left\{-\frac{1}{2}\left(z-\frac{\sigma }{\sqrt{2}}\right)^2\right\}\\
 &=
 2 \exp\left(\sigma^2\right)
\Phi\left(\frac{\sigma}{\sqrt{2}}\right). 
  \end{align*}



  \subsection{Proof of Proposition \ref{prop.corrPM.CLT}}
\label{sec.prove.corrPMCLT}
  Given the bivariate Gaussian form,
  $\log W' |(W=w) \sim \mathsf{N}(\mu',{\sigma'}^2)$,
  with $\mu'=-\sigma^2/2 + \rho(\log w+\sigma^2/2)$ and ${\sigma'}^2=\sigma^2(1-\rho^2)$. We consider $b(w)=w^{1/2-\gamma}$, so we must prove that $\Expects{\qtil}{{W'}^{1-\gamma}|W=w}\ge w^{\gamma}$.

  First, $(1-\gamma) \log W' \sim \mathsf{N}(\{1-\gamma\}\mu',\{1-\gamma\}^2{\sigma'}^2)$ and hence
  \begin{align*}
    \Expect{{W'}^{1-\gamma}|W=w}
    &=
    \exp\left[(1-\gamma)\mu'+\frac{1}{2}(1-\gamma)^2 {\sigma'}^2\right]\\
    &=
    w^{\rho(1-\gamma)}\exp\left[
      -\frac{1}{2}\sigma^2(1-\gamma)(1-\rho)+\frac{1}{2}\sigma^2(1-\gamma)^2(1-\rho^2)
      \right].
    \end{align*}
    Requiring $\rho(1-\gamma)=\gamma$ fixes $\gamma=\rho/(1+\rho)$, which also means that $(1-\gamma)(1+\rho)=1$, so that the terms in the exponential sum to $0$.

\end{document}